\documentclass[11pt]{article}
\usepackage{amsfonts,amsmath,amssymb,amsthm, amscd}
\usepackage{graphicx}

\numberwithin{equation}{section}

\newtheorem{theorem}{Theorem}[section]
\newtheorem{prop}[theorem]{Proposition}
\newtheorem{lemma}[theorem]{Lemma}
\newtheorem{cor}[theorem]{Corollary}

\theoremstyle{definition}
\newtheorem{definition}[theorem]{Definition}
\newtheorem{example}[theorem]{Example}
\newtheorem{remark}[theorem]{Remark}
\newtheorem{question}[theorem]{Question}

\def\<{{\langle}}
\def\>{{\rangle}}
\def\G{{\Gamma}}

\def\e{{\epsilon}}
\def\g{{\gamma}}
\def\S{\mathbb S}
\def\R{\mathbb R}

\def\l{{\lambda}}

\def\Rd{{\cal R}_d}

\def\e{\epsilon}

\def\ni{\noindent}
\def\bs{\bigskip}
\def\Z{\mathbb Z}

\def\D{{\Delta}}

\begin{document}

\title{Invariants of Links in Thickened Surfaces}

\author{J. Scott Carter \and Daniel S. Silver \and Susan G. Williams\thanks{The second and third
authors were partially supported by grants \#245671 and \#245615 from the Simons
Foundaton.} \\ {\em
{\small Department of Mathematics and Statistics, University of South Alabama}}}

\maketitle 

\begin{abstract}  A group invariant for links in thickened closed orientable surfaces is studied. Associated polynomial invariants are defined.
The group detects nontriviality of a virtual link and determines its virtual genus.

Keywords: knot, link, operator group, virtual link, virtual genus.

MSC 2010:  
Primary 57M25; secondary 37B10, 37B40.
\end{abstract}

\section{Introduction} A \emph {link in a thickened surface} is a closed 1-dimensional submanifold $\ell =\ell_1 \cup \cdots \cup \ell_d \subset S \times I$, where $S$ is a closed, connected orientable surface.  Two such links $\ell, \ell' \subset S \times I$ are \emph{equivalent} if there exists an orientation-preserving homeomorphism 
$$h: (S \times I; S \times \{0\}, S \times \{1\}, \ell) \to (S \times I; S \times \{0\}, \ell'). $$ Equivalent links are regarded as the same. 

A link $\ell \subset S \times I$ is \emph{trivial} if its components bound pairwise disjoint embedded disks. An \emph{oriented link} is defined in the usual way by giving an orientation to each component of $\ell \subset S \times I$. The homeomorphism $h$ is required to preserve all orientations.  A \emph{knot} is a link with only one component. 
Links in $\S^2 \times I$ correspond bijectively to isotopy classes of (classical) links in $\S^3$.

Our purpose is to introduce a group and associated polynomial invariants for links $\ell$ in thickened surfaces $S\times I$. It is well known that $\ell$ represents a virtual link. We show that the group associated to $\ell$ detects nontriviality of the virtual link  (Theorem \ref{trivial}) as well as virtual genus (Theorem \ref{genusthm}).

We are grateful to Josh Barnard and Yorck Sommerh\"auser for helpful comments. 


\section{The covering group of a link in a thickened surface.}  Let $\ell = \ell_1 \cup \cdots \cup \ell_d$ be a link in a thickened closed orientable surface $S \times I$.  The universal cover $\tilde S$ of $S$ has deck transformation group $\G = \pi_1 S$. When the genus of $S$ is positive, $\tilde S$ is homeomorphic to  $\mathbb R^2$. The link $\ell$ lifts to $\tilde \ell \subset \tilde S \times I$. Equivalently, one can lift a diagram $D$ for $\ell$ to $\tilde D \subset \tilde S$. When $S$ is a torus, $\tilde D$ is a ``doubly-periodic textile structure" in the sense of \cite{mg09}

\begin{figure}
\begin{center}
\includegraphics[height=2.4 in]{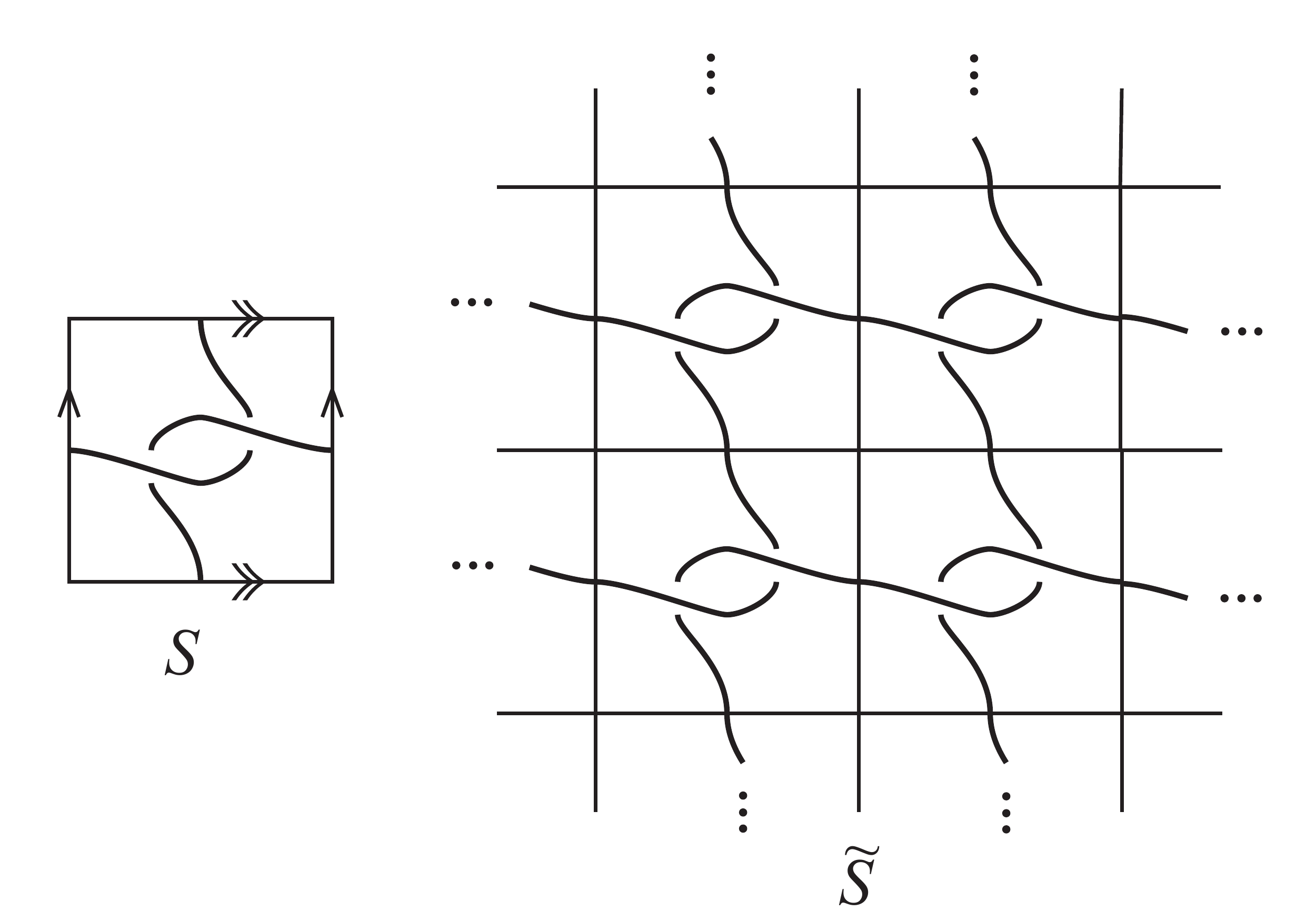}
\caption{Diagram $D$ and lift $\tilde D$}
\label{textile}
\end{center}
\end{figure}

We consider the fundamental group $\pi_1(\tilde S \times I \setminus \tilde \ell)$. A homeomorphism $h: S \times I \to S \times I$  taking one link to another lifts to the universal covers and induces an isomorphism of the corresponding groups. Hence $\pi_1(\tilde S \times I \setminus \tilde \ell)$ is an invariant of $\ell \subset S \times I$. 

\begin{definition} If $\ell \subset S\times I$ is a link in a thickened closed orientable surface, then its \emph{covering group} is $\pi_1(\tilde S \times I \setminus \tilde \ell)$. It is denoted by $\tilde\pi_\ell$. \end{definition}  

\begin{remark} \label{2sphere} When $S = \S^2$, the covering group $\tilde\pi_\ell$ is the classical link group $\pi_1(\S^3\setminus \ell)$. 

\end{remark} 

We assume throughout that $S$ is a closed orientable surface with {\sl positive} genus.
The nontrivial group $\G$ acts on $\tilde\pi_\ell$, and we write $a^\g$ for the image of $a\in\tilde\pi_\ell$ under $\g\in \G$. 

Once an orientation for $\ell$ is chosen, an orientation for $\tilde \ell$ can be lifted. We choose a basepoint in $\tilde S \times \{1\} \subset \tilde S \times I \setminus \tilde \ell$, and  we use it throughout.
Wirtinger's algorithm then yields a presentation for $\tilde\pi_\ell$, with a generator corresponding to each arc of $\tilde D$, and a relator for each crossing. The presentation is infinite. 
However, the generators comprise finitely many orbits $\{ a^\g \mid \g \in \G\}$, one for each arc of $D$. Similarly, we need only a finite number of relator orbits. 

\begin{lemma} \label{square} Let $\ell \subset S \times I$ be a link in a thickened surface. Then $\tilde\pi_\ell$ has a presentation such that the number of generator orbits is equal to the number of relator orbits. 
\end{lemma}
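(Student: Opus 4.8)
The plan is to build the desired presentation from the lifted diagram $\tilde D$, exactly as described in the paragraph preceding the lemma, after first choosing the diagram $D$ on $S$ so that it has the same number of arcs as crossings. Since Wirtinger's algorithm applied to $\tilde D$ produces one generator for each arc of $\tilde D$ and one relator for each crossing of $\tilde D$, and since $\G$ organizes these into one generator orbit for each arc of $D$ and one relator orbit for each crossing of $D$, the lemma will follow once we exhibit a diagram $D$ for $\ell$ with $\#\{\text{arcs of }D\}=\#\{\text{crossings of }D\}$.

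To produce such a $D$, begin with any diagram for $\ell$. A component of $\ell$ that never passes under a crossing (in particular, a crossing-free component) contributes one arc but no under-crossing; for each such component apply a Reidemeister~I move introducing a single kink whose new self-crossing is an under-crossing of that component. This is an isotopy of $\ell$ in $S\times I$, so it does not change $\ell$ up to equivalence, and after finitely many such moves every component passes under at least one crossing. Now, travelling along a component $\ell_i$, its arcs are exactly the segments from one under-crossing to the next, each an arc embedded in $S$; so the number of arcs on $\ell_i$ equals the number $u_i\ge 1$ of times $\ell_i$ passes under. Summing over components, $\#\{\text{arcs of }D\}=\sum_i u_i=\#\{\text{crossings of }D\}$, since each crossing has exactly one under-strand.

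It remains to lift. Write $\tilde D\subset\tilde S\cong\R^2$ for the locally finite lift of $D$, a diagram for $\tilde\ell\subset\tilde S\times I$. Wirtinger's algorithm applies to $\tilde D$ verbatim — the standard deformation retraction of the complement onto a $2$-complex, or a van Kampen argument, goes through for locally finite diagrams — yielding a presentation of $\tilde\pi_\ell$ with one generator per arc of $\tilde D$ and one relator per crossing of $\tilde D$; crucially we retain \emph{every} relator, whereas in the compact case one is customarily discarded. The deck group $\G$ acts freely on $\tilde S$, hence on $\tilde D$, with quotient $D$; since each arc and each crossing of $D$ is a contractible subset, its preimage in $\tilde D$ is a single free $\G$-orbit. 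Thus the generator orbits are indexed by the arcs of $D$ and the relator orbits by the crossings of $D$, and these two numbers agree by the previous paragraph. I expect the one genuinely delicate point to be the justification that Wirtinger's presentation of the noncompact complement $\tilde S\times I\setminus\tilde\ell$, with all relators retained, is valid — that is where the infinitude of $\tilde D$ must be handled, although local finiteness makes the usual argument go through; the diagram bookkeeping in the first two steps is routine.
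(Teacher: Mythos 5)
Your proof is correct and follows essentially the same route as the paper: equalize arcs and crossings via Reidemeister moves (ensuring each component has an under-crossing), then count generator orbits by arcs of $D$ and relator orbits by crossings of $D$ under the free $\G$-action on $\tilde D$. You spell out the counting $\sum_i u_i$ and the Wirtinger-for-noncompact-complements justification more explicitly than the paper does, but the argument is the same.
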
 

\begin{proof} In the diagram $D \subset S$, the number of arcs is greater than or equal to the number of crossings; we can obtain equality by Reidemeister moves. Each arc of $D$ corresponds to a generator orbit in the presentation of $\tilde\pi_\ell$ described above, and each crossing to a relator orbit. 
\end{proof} 

The proof of Lemma \ref{square} suggests a form of presentation for $\tilde\pi_\ell$ that we will use throughout. 

Choose a fundamental domain $R$ for the surface $S$, a $2g$-gon. If a boundary edge of $R$ intersects the diagram $D$ for $\ell$, we can assume that it does so transversely and in its interior.  We can also assume that every component of $D$ contains an under-crossing.
In $R$, select representatives $a_1, \ldots, a_n$ of the $\Gamma$-orbits of arcs, which we identify with meridianal generators in $\tilde\pi_\ell$. 
The edges of $R$ can be oriented and ordered so that they project in $S$ to a set of generators  $x^*_1, y^*_1,\ldots x^*_g, y^*_g$ for $\pi_1(S)= \G$. We label edges (in pairs) with the \emph{dual}  generators $x_1, y_1,\ldots x_g, y_g$ (see Example 3.31 of \cite{hatcher} or page 83 of \cite{cz93}), and we choose these as generators of $\G$. A deck transformation corresponding to a generator, say $x_i^*$, takes $R$ to a contiguous region to the right of an oriented edge labeled $x_i$.  Each $\g\in \G$ carries arcs $a_1, \ldots, a_n$  to arcs identified with $a_1^\g, \ldots, a_n^\g$.  Some of these translated arcs may also intersect the fundamental domain $R$. We write Wirtinger relators $r_1, \ldots, r_m$ corresponding to the crossings in $R$ in the usual fashion. Then $\tilde\pi_\ell$ is presented by the collection of generators $a_i^\g$ and relators $r_j^\g$.

We denote the presentation described above by $\< a_1, \ldots, a_n \mid r_1, \ldots, r_m\>_\G$.  We may regard this either as shorthand for an infinite group presentation, or as an operator group presentation.  Operator groups are discussed in detail in the next section.

Figure \ref{group} below illustrates with a simple example. 

\begin{figure}
\begin{center}
\includegraphics[height=2.5 in]{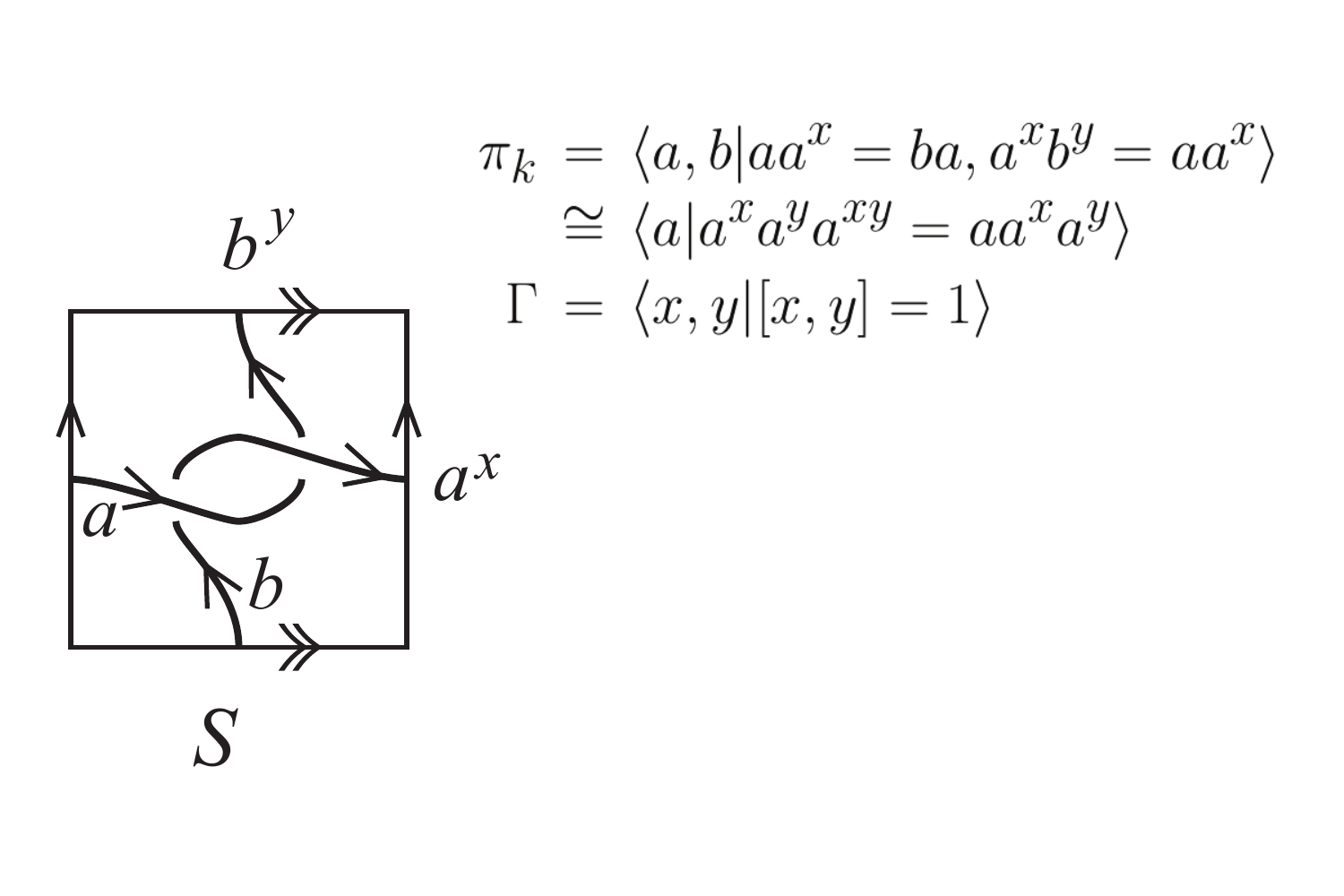}
\caption{Group $\tilde\pi_k$ of a knot $k$ in a thickened torus}
\label{group}
\end{center}
\end{figure}

The groups $\tilde\pi_\ell$ and $\pi_1(S \times I \setminus \ell)$ are, of course, related. 
Given an orbit presentation $P$ of $\tilde\pi_\ell$ as above, obtain a group presentation $\hat P$ by introducing the generators $x_1, y_1,\ldots x_g, y_g$ and relator $\Pi_{i=1}^g [x_i, y_i]$ and replacing any symbol $a^\g, \g \in \G$, appearing in the relators with $\g a \g^{-1}$. 

\begin{prop}\label{relate} $\hat P$ is a presentation of $\pi_1(S \times I \setminus \ell)$.   \end{prop}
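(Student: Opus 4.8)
The plan is to realize $S \times I \setminus \ell$ as a quotient of $\tilde S \times I \setminus \tilde\ell$ by the free action of $\G = \pi_1 S$, and to read off a presentation of the fundamental group of the quotient from the presentation of the fundamental group of the cover together with the group $\G$. Concretely, the covering space $p: \tilde S \times I \setminus \tilde\ell \to S \times I \setminus \ell$ is regular with deck group $\G$, so there is a short exact sequence
$$1 \longrightarrow \tilde\pi_\ell \longrightarrow \pi_1(S \times I \setminus \ell) \longrightarrow \G \longrightarrow 1.$$
The strategy is to produce a presentation of the middle group from this extension: pick a set-theoretic section $\G \to \pi_1(S \times I \setminus \ell)$, note that the images of the standard generators $x_i, y_i$ of $\G$ can be taken to be loops that run out to the basepoint, around an edge of the fundamental domain $R$, and back, and observe that conjugation by the lift of $x_i$ (resp. $y_i$) in $\pi_1(S\times I\setminus\ell)$ induces precisely the deck transformation $x_i^*$ (resp. $y_i^*$) on the normal subgroup $\tilde\pi_\ell$. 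This is exactly the bookkeeping built into the orbit presentation $\<a_1,\dots,a_n \mid r_1,\dots,r_m\>_\G$: the symbol $a^\g$ names the generator obtained by transporting $a$ via $\g$, and in the extension that element is literally $\g a \g^{-1}$. Substituting $\g a \g^{-1}$ for each $a^\g$ in the relators $r_j$ therefore gives relators that hold in $\pi_1(S\times I \setminus \ell)$; adding the single surface relator $\prod_{i=1}^g [x_i,y_i]$ (which presents $\G$ with the chosen generators, since $S$ has positive genus) accounts for the relations among the section elements.

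The key steps, in order: (1) Set up the regular covering and the short exact sequence above, identifying the deck group with $\G$ and recording that $\G$ has the one-relator presentation $\<x_1,y_1,\dots,x_g,y_g \mid \prod [x_i,y_i]\>$. (2) Use the standard fact (a presentation of a group extension from presentations of kernel and quotient, together with a lift of each quotient relation and a conjugation action) to write down a presentation of $\pi_1(S\times I\setminus\ell)$: generators are the $a_i$ together with $x_1,y_1,\dots,x_g,y_g$; relations are the lifted kernel relators $r_j$ (with each $a^\g$ rewritten as $\g a\g^{-1}$), the single lifted quotient relator $\prod[x_i,y_i]$, and the conjugation relations $x_i a_k x_i^{-1} = (\text{the word for } a_k^{x_i^*})$, $y_i a_k y_i^{-1} = (\text{the word for } a_k^{y_i^*})$. (3) Observe that these conjugation relations are not extra: in the orbit presentation, $a_k^{x_i^*}$ is by definition one of the meridian symbols appearing (possibly itself, if the translate $x_i^* a_k$ does not meet $R$, in which case the relation is the tautology $x_i a_k x_i^{-1} = x_i a_k x_i^{-1}$) — so after the substitution rule "replace $a^\g$ by $\g a\g^{-1}$" these relations become trivial identities and may be dropped. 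What remains is exactly $\hat P$. (4) Cross-check via the alternative geometric route: apply the Wirtinger/van Kampen algorithm directly to a diagram of $\ell$ on $S$ drawn in the fundamental domain $R$ — one gets meridian generators $a_i$ plus generators $x_i, y_i$ coming from the edges of $R$ (through which arcs pass), crossing relators, and the surface relator — and check this agrees with $\hat P$; this serves as a sanity verification rather than a separate proof.

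I expect the main obstacle to be making step (3) airtight, i.e. justifying rigorously that the conjugation action of the section elements $x_i, y_i$ on the normal subgroup $\tilde\pi_\ell$ agrees, meridian by meridian, with the deck-transformation action $\g \mapsto a^\g$ used to define the orbit presentation — including a careful treatment of basepoints and of edges of $R$ that the diagram crosses, so that the "word for $a_k^{x_i^*}$" is genuinely the one recorded in the presentation $P$. This is essentially a diagram-chase in the covering space: one must track a meridian loop based at the chosen basepoint in $\tilde S \times \{1\}$, push it by the deck transformation, and verify the resulting loop is conjugate in $\pi_1(S\times I \setminus \ell)$ to the original via the section element, with the conjugating path being exactly the loop representing $x_i^*$ or $y_i^*$. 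Once this compatibility is pinned down, the presentation of the extension is a formal consequence and the substitution $a^\g \rightsquigarrow \g a \g^{-1}$ followed by collapsing the now-trivial conjugation relations yields $\hat P$ immediately.
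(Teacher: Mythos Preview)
Your approach is correct and essentially the same as the paper's: both invoke the short exact sequence of the covering, identify conjugation by the lifted $x_i, y_i$ with the deck-transformation action on $\tilde\pi_\ell$, and then simplify to $\hat P$ by eliminating the redundant conjugation relations. The one point the paper makes explicit that you should too: the inclusion $S \cong S\times\{1\} \hookrightarrow S\times I\setminus\ell$ furnishes a \emph{homomorphic} splitting (so the extension is a semidirect product), which is why the lifted surface relator is exactly $\prod_i[x_i,y_i]$ with no correction term from the kernel --- your chosen lifts as loops in $S\times\{1\}$ achieve precisely this, but the phrase ``set-theoretic section'' undersells what you have and, taken literally, would leave that correction term unaccounted for.
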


\begin{proof} Consider the short exact sequence
$$1 \to \tilde \pi_\ell \buildrel p_* \over \longrightarrow \pi_1(S \times I \setminus \ell)\buildrel q \over \longrightarrow  \G \to 1$$
induced by the covering projection $p: \tilde S \times I \setminus \tilde \ell \to S\times I \setminus \ell$. 
The natural homeomorphism from $S$ to $S \times \{1\} \subset S\times I \setminus \ell$ induces a  splitting $s: \G \to \pi_1(S\times I \setminus \ell)$, and hence $\pi_1(S \times I \setminus \ell)$ is a
semidirect product $\tilde \pi_\ell \rtimes_\theta \G.$ (When $\g \in \G$, we abbreviate the image $s(\gamma)$ by $\gamma$ for notational simplicity.) Let $a_1^\eta, a_2^\eta, \ldots, a_n^\eta\ (\eta \in \G)$ be generators of  the covering group $\tilde \pi_\ell$. Then the group $\pi_1(S\times I \setminus \ell)$ has a presentation of the form $$\<\tilde \pi_\ell, \G \mid \g a_i^\eta \g^{-1}=  \theta_\g(a_i^\eta)   \>,$$ where $\eta, \g$ range over $\G$ and $i=1, \ldots, n$. By the definition of the covering group, $\theta_\g(a_i^\eta)$ is equal to $\g\eta a_i \eta^{-1}\g^{-1}$. When $\g$ is the identity element $e \in \G$, the relations imply that  each $a_i^\eta$ is equal to $\eta a_i \eta^{-1}$. (The symbol $a_i$ is a shorthand for $a_i^e$.) We apply Tietze transformations to eliminate the generators $ a_i^\eta, \eta \ne e$. The remaining 
relations $\g a_i^\eta \g^{-1} = \theta_\g(a_i^\eta)$ then become redundant and we remove them as well. The resulting presentation for $\pi_1(S \times I \setminus \ell)$ is  $\hat P$. 
\end{proof}

Proposition \ref{relate} immediately yields the following fact, provable also by appealing to a short exact sequence in homology. 

\begin{cor} \label{homology} $H_1(S \times I \setminus \ell; \Z) \cong H_1 (S; \Z) \oplus \Z^d$, where $\Z^d$ is generated by the classes of meridians of $\ell$, one from each component.

\end{cor}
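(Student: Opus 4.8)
The plan is to deduce Corollary \ref{homology} directly from the presentation $\hat P$ furnished by Proposition \ref{relate}, by abelianizing. Recall that $\hat P$ has generators $a_1, \ldots, a_n$ (the chosen meridianal representatives) together with $x_1, y_1, \ldots, x_g, y_g$ (generators of $\G$), and relators of two kinds: the single surface relator $\prod_{i=1}^g [x_i, y_i]$, and the Wirtinger relators $r_j$ (with each symbol $a^\g$ replaced by $\g a \g^{-1}$). Passing to the abelianization, the commutator $\prod_{i=1}^g [x_i, y_i]$ becomes trivial, so the $x_i, y_i$ contribute a free summand $\Z^{2g} = H_1(S; \Z)$ with nothing further imposed on them.

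Next I would analyze what the abelianized Wirtinger relators say about the $a_i$. A Wirtinger relator at a crossing has the form $c = b^{\pm 1} a b^{\mp 1}$ in the classical case; after the substitution $a^\g \mapsto \g a \g^{-1}$ and abelianization, every conjugate $\g a_i \g^{-1}$ collapses to $a_i$, so each relator reduces to a statement of the form $a_k = a_\ell$ identifying the two meridians meeting at an overpass with the same arc of the underlying knot component. Running over all crossings, these relations identify all meridians lying on a single component $\ell_t$ of $\ell$; hence in $H_1$ the images of the $a_i$ span exactly a $\Z^d$, one generator per component, and there are no further relations among these $d$ classes (this is the standard fact that the meridians of distinct components are homologically independent in a link complement — here it follows because no relator ever mixes meridians from different components).

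Finally I would observe that the abelianization visibly splits as a direct sum: the relators involving the $x_i, y_i$ and those involving the $a_i$ do not interact after abelianizing (the $x_i, y_i$ appear in the $a$-relators only in conjugating position, which disappears). Therefore $H_1(S \times I \setminus \ell; \Z) \cong \Z^{2g} \oplus \Z^d = H_1(S; \Z) \oplus \Z^d$, with the second summand generated by the meridian classes, one from each component, as claimed. As the corollary's statement notes, one could alternatively run the Mayer--Vietoris or the long exact sequence of the pair $(S \times I, S \times I \setminus \ell)$, using that $S \times I \simeq S$ and that the normal disk bundle of $\ell$ contributes $H_2 \cong \Z^d$ via Thom isomorphism; I would mention this as a remark but carry out the presentation argument as the main line. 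The only mildly delicate point is justifying that the Wirtinger relators impose no relations among the $d$ component-meridians beyond what is claimed; this is immediate once one notes each relator is ``local'' to a single component, so I do not expect a genuine obstacle here.
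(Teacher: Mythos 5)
Your proof is correct and is exactly the argument the paper intends: the paper states that the corollary ``immediately'' follows from Proposition \ref{relate}, i.e., by abelianizing the presentation $\hat P$, which is what you carry out. Your observation that each abelianized Wirtinger relator merely identifies meridians on a single component (so the $\Z^d$ summand is free and independent of the $\Z^{2g}$ coming from $x_i, y_i$) fills in the step the paper leaves implicit.
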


\begin{remark} \label{spatial graph} $S \times I$ is the exterior of a spatial graph consisting of a pair of disjoint, standardly embedded $g$-leafed roses $\vee_i X_i, \vee_i Y_i \subset \S^3$ such that ${\rm lk}(X_i, Y_j) = \delta_{i,j}$.  Hence $S \times I \setminus \ell$ is the exterior of the spatial graph $\Gamma = (\vee_i X_i) \cup  (\vee_i Y_i) \cup \ell$, where $\ell$ is disjoint from the circles $X_i, Y_i$. When $g=1$, $\Gamma$ is a classical link (cf. \cite{mg09}).  In the presentation $\hat P$ of Proposition \ref{relate}, we may regard $x_i$ as the class of $X_i$. However, $y_i$ is not in general conjugate to the class of $Y_i$. 
\end{remark}

\section{Operator groups.} The covering group $\tilde\pi_\ell$ is an example of an operator group, a notion introduced by Krull and Noether. 
Additional material can be found in \cite{bourbaki}, \cite{kurosh}, \cite{robinson}.

\begin{definition} An \emph{operator group} is a pair $(\pi, \G)$ and 
a function $\pi \times \G \to \pi,\ (g, \g) \mapsto g^\g$, such that \begin{enumerate}
\item $\pi$ is a group; 
\item $\G$ is a set (the ``operator set"); 
\item $\forall \g \in \G$, the map $g \mapsto g^\g$ is an endomorphism of $\pi$. \end{enumerate}
\end{definition}

\begin{remark} When $\G$ is empty, $(\pi, \G)$ is a group in the usual sense. 
\end{remark}

In the operator groups that we consider, $\G$ is itself a group. We assume additional structure: \bs

4. $g^{\g \eta} = (g^\g)^\eta\ \forall  g\in \pi, \g, \eta \in \G$ \bs

\ni The abelianization of $\pi$ can then be regarded in a natural way as a right $\Z[\G]$-module.

\begin{definition} Let $(\pi, \G)$ and $(\bar \pi, \bar \G)$ be operator groups and $\G, \bar \G$ groups. A {\it homomorphism} $(f, \phi): (\pi, \G) \to (\bar \pi, \bar \G)$ consists of group homomorphisms $f: \pi \to \bar \pi$ and $\phi: \G \to \bar \G$ such that $$f(g^\g) = f(g)^{\phi(\g)}\ \forall g \in \pi, \g \in \G.$$  An {\it isomorphism} is a homomorphism $(f, \phi)$ such that both $f$ and $\phi$ are isomorphisms.  \end{definition}

Henceforth we regard $\tilde\pi_\ell$ as an operator group with $\G = \pi_1(S)$. 
If $\ell, \ell' \subset S \times I$ are equivalent links, then there exists an isomorphism from $\tilde\pi_\ell$ to $\tilde\pi_{\ell'}$. The automorphism $\phi: \G \to \G$ can in fact be chosen to be an automorphism that is induced by a self-homeomorphism of $S$, as we see next.

Let ${\rm Aut}_h(\G)$  denote the subgroup of ${\rm Aut}(\G)$ consisting of automorphisms induced by orientation-preserving homeomorphisms of $S$. 

\begin{theorem} Let $\ell, \ell'\subset S \times I$ be equivalent links in a thickened surface. There exists an isomorphism $(f, \phi): (\tilde\pi_\ell, \G) \to (\tilde\pi_{\ell'}, \G)$ such that $\phi \in {\rm Aut}_h(\G)$. \end{theorem}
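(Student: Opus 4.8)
The plan is to start from the defining equivalence of $\ell$ and $\ell'$: an orientation-preserving homeomorphism $h\colon (S\times I; S\times\{0\}, S\times\{1\}, \ell)\to (S\times I; S\times\{0\}, S\times\{1\}, \ell')$. First I would observe that $h$ need not preserve the product structure of $S\times I$, so it does not immediately restrict to a homeomorphism of $S$. However, since $h$ preserves $S\times\{0\}$ and $S\times\{1\}$ setwise, it induces self-homeomorphisms $h_0, h_1$ of $S$, and the whole map $h$ is homotopic (through maps of the stated quadruple, or at least through maps of $S\times I$ to itself carrying $S\times I\setminus\ell$ to $S\times I\setminus\ell'$) to a level-preserving homeomorphism. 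The cleanest route is: the inclusion $S\times\{1\}\hookrightarrow S\times I$ is a homotopy equivalence, and $h$ restricts on $S\times\{1\}$ to $h_1\colon S\to S$; I want to use $h_1$ to define $\phi$.

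Next I would set $\phi = (h_1)_*\colon \pi_1(S)\to\pi_1(S)$, which lies in ${\rm Aut}_h(\G)$ by definition since $h_1$ is an orientation-preserving homeomorphism of $S$ (orientation-preserving because $h$ is orientation-preserving on $S\times I$ and preserves the two boundary components with their induced orientations). Then I would lift $h$ to the universal covers: $h$ induces a homeomorphism $S\times I\setminus\ell \to S\times I\setminus\ell'$, which lifts to $\tilde h\colon \tilde S\times I\setminus\tilde\ell \to \tilde S\times I\setminus\tilde\ell'$, equivariant with respect to $\phi$ on the deck group $\G$. Choosing the lift so that it carries the chosen basepoint of $\tilde S\times\{1\}$ appropriately (or adjusting by a deck transformation afterward), $\tilde h$ induces $f\colon \tilde\pi_\ell\to\tilde\pi_{\ell'}$, an isomorphism of groups. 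The equivariance $\tilde h\circ\g = \phi(\g)\circ\tilde h$ on the covering space translates precisely into the identity $f(a^\g) = f(a)^{\phi(\g)}$, so $(f,\phi)$ is an isomorphism of operator groups.

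The main obstacle, and the step requiring the most care, is justifying that the automorphism of $\G$ appearing in the equivariance is induced by a homeomorphism of $S$ rather than merely being some abstract automorphism compatible with the covering. Concretely, one must check two things: (a) the deck transformation group of $\tilde S\times I\setminus\tilde\ell\to S\times I\setminus\ell$ is genuinely identified with $\G=\pi_1(S)$ (this follows from the covering being pulled back from $\tilde S\to S$, i.e.\ the restriction of $\tilde S\times I\to S\times I$), and (b) under this identification, the automorphism of $\G$ induced by the lifted homeomorphism $\tilde h$ of the covers agrees with $(h_1)_*$. For (b), the key point is that $\G$ acts on $\tilde S\times I$ through its action on the first factor, $h$ maps $S\times\{1\}$ to $S\times\{1\}$ by $h_1$, and the covering restricted over $S\times\{1\}$ is just $\tilde S\to S$; so the $\phi$ coming from the lift, when restricted to the sub-covering over the top level, is exactly the automorphism induced by $h_1$ on $\pi_1(S)$. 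Since the action of $\G$ on the covering of $S\times I\setminus\ell$ is determined by its action on the top-level covering, this pins down $\phi=(h_1)_*\in{\rm Aut}_h(\G)$, completing the argument.
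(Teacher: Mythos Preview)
Your proposal is correct and follows essentially the same approach as the paper: restrict $h$ to $S\times\{1\}$ to obtain an orientation-preserving self-homeomorphism of $S$ inducing $\phi\in{\rm Aut}_h(\G)$, lift $h$ to the universal cover to obtain $f$, and use the equivariance of the lift to conclude that $(f,\phi)$ is an operator-group isomorphism. If anything, you are more explicit than the paper about why the automorphism of $\G$ arising from the lift coincides with $(h_1)_*$ and about the basepoint adjustment; the paper handles these points tersely by assuming $h$ fixes the basepoint and asserting the lifted map induces both $f$ and $\phi$.
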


\begin{proof} Assume that there exists an orientation-preserving homeomorphism $h: (S \times I, S\times \{i\}) \to (S \times I, S\times \{i\})$, $i= 0, 1$, taking $\ell$ to $\ell'$. Then $h$ restricts to an orientation-preserving homeomorphism of $S \times \{1\}$, which we identify with $S$. Without loss of generality, we can assume that $h$ leaves fixed the basepoint $* \in S$.

The map $h$ lifts to a homeomorphism $\tilde h$ of $\tilde S \times I$ that leaves fixed a lift $\tilde *$ of the point $*$. It induces an isomorphism 
$\tilde f: \pi_1( \tilde S \times I \setminus \tilde \ell, \tilde *) \to \pi_1( \tilde S \times I \setminus \tilde \ell', \tilde *)$ and also an automorphism 
$\phi$ of $\pi_1(S, *)$. The pair $(f, \phi)$ determines 
an isomorphism from $\tilde\pi_\ell$ to $\tilde\pi_{\ell'}$. 
\end{proof} 

\begin{theorem}\label{trivial} A link $\ell= \ell_1 \cup \cdots \cup \ell_n$ in a thickened surface $S \times I$ is trivial if and only if $\tilde\pi_k \cong \<a_1, \ldots, a_n \mid \>_\G$. \end{theorem}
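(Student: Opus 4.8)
### Proof proposal for Theorem \ref{trivial}

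The plan is to prove the nontrivial direction: that the operator-group isomorphism $\tilde\pi_\ell \cong \langle a_1, \ldots, a_n \mid \ \rangle_\G$ forces the link to be trivial. (The forward direction is routine: if $\ell$ is trivial, a diagram with no crossings gives exactly this presentation with $n$ generator orbits and no relator orbits.) For the converse, my first step is to translate the hypothesis into a statement about the classical group $\pi_1(S \times I \setminus \ell)$ via Proposition \ref{relate}: applying the $\hat P$ construction, the hypothesis gives that $\pi_1(S \times I \setminus \ell)$ admits a presentation with generators $a_1, \ldots, a_n, x_1, y_1, \ldots, x_g, y_g$ and the single relator $\prod_{i=1}^g [x_i, y_i]$. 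In other words, $\pi_1(S \times I \setminus \ell)$ is a free product of the surface group $\G = \pi_1(S)$ with a free group $F_n$ of rank $n$ freely generated by the meridians $a_1, \ldots, a_n$ — equivalently, $\pi_1(S \times I \setminus \ell) \cong \pi_1(S) * F_n$, with one free generator for each component of $\ell$.

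Next I would invoke the interpretation from Remark \ref{spatial graph}: $S \times I \setminus \ell$ is the exterior of the spatial graph $\G_0 \cup \ell$, where $\G_0$ is the standard pair of disjoint $g$-leafed roses in $S^3$ with ${\rm lk}(X_i, Y_j) = \delta_{i,j}$. The exterior of $\G_0$ alone is $S \times I$ and has fundamental group $\G = \pi_1(S)$. So the hypothesis says that adjoining the link $\ell$ to this spatial graph has the effect on $\pi_1$ of a free product with a free group of rank $n = $ (number of components), exactly as if $\ell$ were a trivial link lying in a ball disjoint from $\G_0$. The strategy now is to run a "the group sees the unknot" argument in this relative setting. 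Because we are in a thickened surface and not $S^3$, I cannot directly quote the classical Papakyriakopoulos/Dehn's-lemma statement; instead I would first pass to the virtual-link viewpoint promised in the introduction: $\ell \subset S \times I$ represents a virtual link, and triviality of $\ell$ as a link in $S \times I$ is equivalent to triviality of that virtual link together with the statement that $S$ is the virtual-genus-minimal surface. The cleanest route is: show that $H_1$ of the complement, as a $\Z[\G]$-module (the abelianization of $\tilde\pi_\ell$), is free — which it is, being the abelianization of a free operator group $\langle a_1,\ldots,a_n\mid\ \rangle_\G$ — and then use that together with an Alexander-ideal/Papakyriakopoulos sphere-theorem argument to conclude that each component of $\ell$ bounds a disk and these disks can be made disjoint.

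The main obstacle, and the step that needs the most care, is making the "unknotting" conclusion rigorous in the thickened-surface setting. Concretely, one must show: if $\pi_1(S \times I \setminus \ell)$ splits as $\pi_1(S) * F_n$ compatibly with the meridian structure, then each $\ell_i$ bounds an embedded disk in $S \times I$ and these disks can be chosen disjoint. I would handle the single-component case first by the loop theorem / Dehn's lemma applied to the complement (the meridian of $\ell_i$ is killed in the obvious quotient, producing an embedded disk), then handle multiple components by an innermost-circle induction on intersections of successive disks, pushing $\ell$ into a ball and appealing to the classical statement there; the free-product decomposition is what guarantees at each stage that the relevant meridian still has the needed property after splitting off earlier components. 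An alternative, and perhaps slicker, finish is to note that a free operator group $\langle a_1, \ldots, a_n \mid \ \rangle_\G$ has trivial second homology in the appropriate sense, so the complement is aspherical only in the trivial way, and then quote the fact that a virtual link whose group is as small as possible (rank-$n$ free over $\Z[\G]$ abelianization, free group quotient) must be the trivial virtual link with minimal supporting genus — this is where Theorem \ref{genusthm} and the virtual-genus machinery would be pulled in. I expect the authors' actual proof to take the direct geometric route via the loop theorem, so I would organize the writeup around that: reduce to $\pi_1(S\times I\setminus\ell)\cong \pi_1(S)*F_n$, extract disjoint spanning disks by an innermost-disk argument using the loop theorem and the free-product structure, and conclude.
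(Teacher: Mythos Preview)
Your proposal does not close the main gap: to apply Dehn's lemma or the loop theorem you must exhibit an essential simple closed curve on the peripheral torus $\partial N_i$ that is already null-homotopic in $S\times I\setminus\ell$, and you never do this. The line ``the meridian of $\ell_i$ is killed in the obvious quotient, producing an embedded disk'' is not a valid invocation of the loop theorem (the meridian itself is never null-homotopic, as it survives in $H_1$), and the abstract free-product description $\pi_1(S\times I\setminus\ell)\cong\pi_1(S)*F_n$ does not by itself tell you which peripheral curve dies, because the isomorphism coming from the hypothesis is an \emph{abstract} operator-group isomorphism and need not carry meridians to your chosen free generators $a_i$. Your fallback appeal to Theorem~\ref{genusthm} would also be a forward reference.

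The paper's argument supplies exactly the missing step, and it works upstairs rather than downstairs. First, the hypothesis makes $H_1(\tilde S\times I\setminus\tilde\ell;\Z)$ a free $\Z[\G]$-module, so $\G$ acts freely on it; since a longitude $\lambda\in\G=\pi_1(S\times I)$ fixes the homology class of a meridian of $\tilde\ell_i$ (it merely slides that meridian along the same lifted component), one concludes $\lambda=1$ in $\G$. Thus each component of $\tilde\ell_i$ is a closed curve and $\lambda$ lifts to a closed curve $\tilde\lambda$. Second, $\tilde\lambda$ and the lifted meridian $\tilde m$ commute in $\tilde\pi_\ell$, which is a \emph{free} group by hypothesis, so they are powers of a common element; since $[\tilde m]$ is primitive in $H_1$, $\tilde m$ is not a proper power, hence $\tilde\lambda=\tilde m^k$. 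Rechoosing the longitude makes $\tilde\lambda$, and therefore its projection $\lambda$, null-homotopic in the complement of the \emph{entire} link $\ell$; Dehn's lemma then yields an embedded disk for $\ell_i$ lying in the exterior of all of $\ell$, so no innermost-circle induction is needed to make the disks for the various components disjoint.
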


\begin{proof} If $\ell$ is trivial, then clearly $\tilde\pi_\ell \cong \<a_1, \ldots, a_n \mid \>_\G$. 

Conversely, assume that $\tilde\pi_\ell \cong \<a_1, \ldots, a_n \mid \>_\G$. It suffices to prove that any component $\ell_i$ bounds a disk that does not intersect any of the other components. 

The group $\G$ acts freely on 
$H_1(\tilde S \times I \setminus \tilde \ell; \Z)$, which is freely generated by the classes of meridians of distinct components of $\tilde \ell$, with orientations induced by a fixed orientation of $\ell$. 
  
Fix $i \in \{1, \ldots, n\}$, and choose a {\it longitude} for $\ell_i$, an oriented simple closed curve $\l$ in the boundary of a tubular neighborhood $N_i$ of $\ell_i$ and intersecting a meridian $m$  transversely in a single point. (The homotopy class of the longitude $\l$ is not unique.) Then $\l$ together with a base path represents an element of $\G = \pi_1(S \times I)$. For notational convenience, $\l$ will also denote this element. 

Let $\tilde m$ be any meridian of the preimage $\tilde \ell_i$ of $\ell_i$. Consider the class $[\tilde m] \in H_1(\tilde S \times I \setminus \tilde \ell; \Z)$. The action of $\l$ takes $\tilde m$ to another, homologous meridian of the same component of $\tilde \ell_i$, and hence it fixes the class $[\tilde m]$. Since $\G$ acts freely, either $\l = 1$ or else $[\tilde m] =0$. But $\tilde m$ is an arbitrary meridian of $\tilde \ell_i$, and meridians of distinct components of $\ell_i$ are among the set of free generators of $H_1(\tilde S \times I \setminus \tilde \ell; \Z)$. Hence $\l = 1$. We conclude that each component of $\tilde \ell_i$ is a closed curve. 

Consider any component of $\tilde \ell_i$. Lift $\l$ to $\tilde \l$ in the boundary $\partial \tilde N_i$ of a tubular neighborhood of the component. Let $\tilde m\subset \partial \tilde N_i$ be a meridian such that $\tilde \l$ and $\tilde m$ intersect transversely and in a single point. Again for notational convenience, we let $\tilde \l$ and $\tilde m$ together with base paths denote the 
elements of $\tilde\pi_\ell$ that they represent. Since $\tilde \l$ and $\tilde m$ commute and 
$\tilde\pi_\ell$ is free, $\tilde \l$ and $\tilde m$ must be powers of a common element. However, $\tilde m$ is not a proper power since its $\G$-orbit is among a set of free generators of $\tilde\pi_\ell$. Hence $\tilde \l$ is a power of $\tilde m$. Reselecting $\l$, if necessary, we can assume that $\tilde \l$ is trivial in $\tilde\pi_\ell$. Dehn's lemma implies that $\tilde \l$ bounds a properly embedded disk in the exterior of $\tilde \ell$. 

Projecting down, we find that $\l$ is null-homotopic in $S \times I \setminus \ell$. Dehn's lemma now implies that $\l$ bounds an embedded disk in the exterior of $\ell$. Since the component $\ell_i$ that we considered was arbitrary, the link $\ell$ is trivial. 
\end{proof}


\section{Polynomial invariants from the covering group.} \label{poly}  Let $\ell= \ell_1 \cup \ldots \cup \ell_d$ be an oriented link in a thickened surface $S \times I$. Let $\e: \tilde\pi_\ell \to \Z^d = \< t_1, \ldots, t_d \mid [t_i, t_j]=1 \ \forall \ i, j\>$ be the homomorphism that maps every meridian of the lift of $\ell_i$ to $t_i, 1 \le i \le d$. 
Let $K$ be the kernel of $\e$. Its abelianization $M= K/[K, K]$ is a right-module over $\Z[\G \times \Z^d]$. In order to obtain a Noetherian module, we pass to the quotient $\bar M = M/M_0$, where $M_0$ is the submodule of $M$ generated by all elements of the form $a^\gamma - a^\eta$, where $a \in M$, $\gamma, \eta \in \G$, and $\gamma \eta^{-1} \in [\G, \G]$. Then $\bar M$ is a right-module over the Noetherian ring 
$\Z[H_1\G \times \Z^d] \cong (\Z[\Z^{2g}])[t_1^{\pm1}, \ldots, t_d^{\pm 1}]$.

Denote $\Z[\Z^{2g}]$ by ${\cal R}$. 
By Lemma \ref{square}, $\bar M$ is presented by a square $n \times n$ matrix $A$ over $\Rd= {\cal R}[t_1^{\pm1}, \ldots, t_d^{\pm 1}]$. For any nonnegative integer $i$, define $\D_i(\ell)$ to be the greatest common divisor of the 
$(n-i)\times(n-i)$ minors of $A$.   We call $\D_i(\ell)$ the 
$i$th \emph{Alexander polynomial} of $\ell\subset S\times I$.

\begin{remark} (1) For convenience, we refer to elements of both $\G$ and 
$H_1 \G = \G/[\G, \G]$ as \emph{operators}. 

(2) We have assumed throughout that the genus of $S$ is positive. If we were to consider the case $S = \S^2$, then $\D_i(\ell)$ would be the usual Alexander polynomial invariants of $\ell$. \end{remark} 

The polynomials $\D_i(\ell)$ are well defined up to multiplication by units in $\Rd$ and symplectic change of coordinates in $H_1 \G$. We make this precise: 

Recall that a module $H$ over $\Z$ (resp. $\R$) is \emph{symplectic} if it is equipped with a skew-symmetric pairing $H \times H \to \Z$ (resp. $H \times H \to \R$),  $(v, w) \mapsto v\cdot w$.  A \emph{standard basis} is a basis $a_1, b_1, \ldots, a_g, b_g$ of $H$ such that $a_i\cdot a_j = b_i \cdot b_j =0$ and $a_i\cdot b_j =\delta_{i,j}$ for all $i, j$. The first homology group of any compact oriented surface is a symplectic module, and a standard basis exists that is represented by simple closed oriented circles.

Fix a standard basis for $H_1(S; \Z) \cong \Z^{2g}$. Any $\phi \in {\rm Aut}_h(\G)$ induces an element of the symplectic group  ${\rm Sp}(2g, \Z)$, and hence an automorphism $\phi_\sharp$ of $\Rd$ by extending linearly in ${\cal R}$ and mapping each $t_i$ to itself. Two polynomials $\D, \D'$ are 
\emph{equivalent} if $\D' = u\cdot \phi_\sharp(\D)$ for some unit $u\in \Rd$ and some $\phi \in {\rm Aut}_h(\G)$. \bs

\begin{prop} \label{orient} Assume that $\ell'$ is obtained from $\ell$ by reversing the orientation of the $j$th component. Then, for any $i\ge 0$,  $\D_i(\ell')$ is obtained from $\D_i(\ell)$ by replacing $t_j$ with $t_j^{-1}$. \end{prop}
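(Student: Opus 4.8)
The plan is to track what happens to the covering group presentation, and then to the module $\bar M$ and its presentation matrix $A$, when a single component changes orientation. First I would recall that reversing the orientation of $\ell_j$ replaces each meridian $\mu$ of the lift $\tilde\ell_j$ by its inverse $\mu^{-1}$; the Wirtinger generators and relators attached to arcs of $\tilde D$ coming from $\ell_j$ get replaced by their formal inverses, but the group $\tilde\pi_\ell$ itself is unchanged as an abstract operator group, since orientation of an arc is not part of that data. What does change is the distinguished epimorphism $\e$: for $\ell'$ the map sends the reversed meridians to $t_j$, which is the same as sending the original meridians of $\tilde\ell_j$ to $t_j^{-1}$. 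In other words, $\e' = \sigma\circ\e$ where $\sigma\colon \Z^d\to\Z^d$ is the automorphism inverting $t_j$ and fixing the other $t_i$.

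Next I would push this through the construction of the Alexander module. The kernel $K' = \ker\e'$ equals $K=\ker\e$ as a subgroup of $\tilde\pi_\ell$, so $M' = K'/[K',K'] = M$ as abelian groups; the only difference is the $\Z[\G\times\Z^d]$-action, where the action of $t_j$ through $\e'$ is the action of $t_j^{-1}$ through $\e$. Passing to $\bar M$ is unaffected by this (the submodule $M_0$ is defined using only the $\G$-action, not the $\Z^d$-action), so $\bar M'$ is literally $\bar M$ with the ground ring $\Rd$ acted on through the ring automorphism $\sigma\colon \Rd\to\Rd$ sending $t_j\mapsto t_j^{-1}$ and fixing ${\cal R}$ and the other $t_i$. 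Concretely, in the Wirtinger/Fox-calculus presentation, the presentation matrix $A'$ for $\bar M'$ is obtained from $A$ by applying $\sigma$ entrywise: every occurrence of $t_j$ becomes $t_j^{-1}$. (One should check that the Fox derivatives behave as expected: reversing an arc replaces its generator symbol by its inverse, which in the abelianized/$\Z[\G\times\Z^d]$ setting multiplies the corresponding column of $A$ by the appropriate unit $\pm t_j^{\pm1}$; column multiplication by a unit does not change any of the $\D_i$, so the net effect on $\D_i$ is exactly the substitution $t_j\mapsto t_j^{-1}$.)

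Finally I would conclude: the $(n-i)\times(n-i)$ minors of $A'$ are the images under $\sigma$ of the corresponding minors of $A$, up to units, so their greatest common divisor satisfies $\D_i(\ell') = \sigma(\D_i(\ell))$ up to a unit of $\Rd$, which is precisely the statement that $\D_i(\ell')$ is obtained from $\D_i(\ell)$ by replacing $t_j$ with $t_j^{-1}$. Here one uses that $\sigma$ is a ring automorphism of $\Rd$, so it carries a gcd to a gcd and a unit to a unit.

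The main obstacle, and the step deserving the most care, is the bookkeeping in the second paragraph: verifying that reversing one component's orientation affects the Wirtinger presentation and its Fox-Jacobian only by the substitution $t_j\mapsto t_j^{-1}$ together with column operations by units, with no residual effect on the module's isomorphism type beyond the ring automorphism $\sigma$. Everything else — the invariance of $\bar M$'s construction under this relabeling, and the behavior of gcd of minors under a ring automorphism — is routine once that is pinned down.
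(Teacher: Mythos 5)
Your proof is correct and takes essentially the same approach as the paper's (very terse) argument: reversing the orientation of $\ell_j$ inverts the meridian generators for that component, and tracking this through the Fox Jacobian gives exactly the substitution $t_j\mapsto t_j^{-1}$ together with column scalings by units of $\Rd$, which do not affect the $\D_i$. Your write-up is simply a careful elaboration of the paper's two-sentence proof, which just invokes ``standard Fox calculus as for classical links in the 3-sphere.''
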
 

\begin{proof} Changing the orientation of some component of $\ell$ alters the covering group by inverting generators corresponding to meridians of the component. The conclusion follows using standard Fox calculus as for classical links in the 3-sphere.    \end{proof}

\begin{example}
Returning to example of Figure 4, $$A = \begin{pmatrix}1+t x-t & -1\\ x-1-t x & t y \end{pmatrix}$$ and 
$$\D_0(k) = (xy - y) t^2 + (y- x)t + (x-1).$$ Here we write $t$ instead of $t_1$ and $x, y$ instead of $x_1, y_1$.  (In later examples, we avoid subscripts in a similar fashion.) A Dehn twist induces $\phi_\sharp: x \mapsto x y, 
y \mapsto y$. Hence $\D_0(k)$ is equivalent to $(x y^2 - y)t^2 + (y- x y) t + (x y -1)$. \end{example}

Consider the projection $q: \Rd \to \Z[t_1^{\pm 1}, \ldots, t_d^{\pm 1}]$ induced by the trivial homomorphism $\G \to \{1\}$. 

\begin{prop}\label{augment}  If $k$ is any oriented knot in a thickened surface, then $$q(\D_0(k)) = 0.$$\end{prop}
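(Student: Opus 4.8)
The plan is to exhibit $q(\D_0(k))$ as the determinant of a square matrix whose rows all sum to zero. By Lemma \ref{square} the module $\bar M$ is presented by a square $n\times n$ matrix $A$ over $\Rd$, so $\D_0(k)$ — the greatest common divisor of the $(n-0)\times(n-0)$ minors of $A$ — is simply $\det A$, up to a unit. Since $q\colon \Rd \to \Z[t_1^{\pm1},\dots,t_d^{\pm1}]$ is a ring homomorphism, $q(\D_0(k))\doteq\det\bigl(q(A)\bigr)$, where $q(A)$ is obtained by applying $q$ entrywise. Because $k$ has a single component, $d=1$ and $q(A)$ is a square matrix over $\Z[t^{\pm1}]$ (write $t=t_1$); it suffices to prove $\det q(A)=0$, and for this I would place the all-ones column vector $(1,\dots,1)^{\top}$ in the kernel of $q(A)$, i.e. show that every row of $q(A)$ sums to $0$.

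The $(i,j)$ entry of $A$ is the image in $\Rd$ of the operator Fox derivative of the Wirtinger relator $r_i$ with respect to the arc $a_j$, namely of $\sum_{\g\in\G}\g\cdot(\partial r_i/\partial a_j^{\g})$, once each meridional symbol $a_k^{\eta}$ is sent to $t_{c(k)}$ (with $c(k)$ the index of the component containing $a_k$); applying $q$ further replaces every operator $\g\in\G$ by $1$. Now I would invoke the fundamental identity of Fox calculus in the free operator group on $a_1,\dots,a_n$ over $\G$: $r_i-1=\sum_{j}\sum_{\g\in\G}(\partial r_i/\partial a_j^{\g})\,(a_j^{\g}-1)$. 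Push this identity forward along the composite map sending every generator $a_k^{\eta}$ to $t$ — legitimate precisely because $k$ is a knot, so $c(k)\equiv 1$. Since a Wirtinger relator has the form $(a_m^{\mu})^{-1}(a_\ell^{\g})^{\pm1}a_k^{\nu}(a_\ell^{\g})^{\mp1}$, its total exponent sum in the generators is $0$, so the left-hand side maps to $t^{0}-1=0$; the right-hand side maps to $(t-1)\sum_{j}q(A_{ij})$. As $\Z[t^{\pm1}]$ is an integral domain and $t-1\ne 0$, we conclude $\sum_{j}q(A_{ij})=0$ for every $i$.

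Therefore $q(A)\cdot(1,\dots,1)^{\top}=0$, so $q(A)$ is singular and $q(\D_0(k))\doteq\det q(A)=0$, hence $q(\D_0(k))=0$. Conceptually, $q$ trivializes $\G$, and by Proposition \ref{relate} the matrix $q(A)$ is a Wirtinger–Alexander matrix of the classical knot obtained from the diagram $D$ by forgetting the surface, whose $0$th elementary ideal vanishes for the usual reason; the computation above is just that classical fact, made explicit in the operator setting. The only real point requiring care is the precise bookkeeping relating the entries of $A$ to operator Fox derivatives and checking that the fundamental identity survives the specialization $a_k^{\eta}\mapsto t$; this, together with the observation that one-componentedness is essential (for a link of $d\ge 2$ components the right-hand side would be $\sum_{j}q(A_{ij})\,(t_{c(j)}-1)$, which need not force any single row sum to vanish), is where the work lies.
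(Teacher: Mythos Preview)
Your proof is correct and follows essentially the same approach as the paper: both arguments show that after applying $q$ (setting every operator in $\G$ to $1$), each row of the resulting Wirtinger--Alexander matrix sums to zero, so the determinant vanishes. The paper states this in one line without invoking Fox calculus explicitly, whereas you spell out the mechanism via the fundamental Fox identity and note why the knot hypothesis is essential; the content is the same.
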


\begin{proof} If we choose the rows of $A$ to correspond to Wirtinger relations, then setting each element of $\G$ equal to $1$ will make the entries of each row sum to zero. Hence the determinant vanishes. \end{proof}

\begin{question} Does the conclusion of Proposition \ref{augment} hold for links of more than one component? 
\end{question}

\section{Symplectic rank.} Let $V$ be a submodule of $H_1\G \cong \Z^{2g}$. Tensoring with $\R$, we obtain a subspace $W = V \otimes \R$ of $H_1\G \otimes \R \cong \R^{2g}$.  

\begin{definition} The \emph{symplectic rank} of $V$, denoted by $rk_s(V)$,  is the dimension of
$W/ W\cap W^\perp$, where $W^\perp = \{v \in \R^{2g}\mid v\cdot w=0 \ \forall w \in W\}$. \end{definition}

\begin{remark} It is not difficult to see that $rk_s(V)$ is the dimension of a maximal symplectic subspace of $\R^{2g}$ contained in $W$. \end{remark}

Let $\tilde \pi_\ell$ denote the covering group of a link $\ell \subset S \times I$ in a thickened surface. As above, we regard $\tilde \pi_\ell$ as a $\G$-operator group.

\begin{definition} Let $P$ be a presentation of $\tilde \pi_\ell$. Its \emph {symplectic rank} $rk_s (P )$ is the symplectic rank of the submodule $W_P$ of $H_1\G$ generated by the operators that appear in relators. 

The \emph {symplectic rank} of $\tilde \pi_\ell$ is the minimum of $rk_s(P)$, taken over all presentations $P$ of $\tilde \pi_\ell$. It is denoted by $rk_s(\tilde \pi_\ell)$.
\end{definition}

\begin{definition} The \emph {symplectic rank} of $\D_0(\ell)$ is the symplectic rank of the submodule $W_\D$ of $H_1\G$ generated by quotients of operators that appear in the coefficients of $\D_0(\ell)$. It
is denoted by $rk_s(\D_0(\ell))$. \end{definition}

\begin{prop}\label{srank} The symplectic rank of $\D_0(\ell)$ is well defined and independent of the orientation of $\ell$. Moreover,
$$rk_s(\D_0(\ell)) \le rk_s(\tilde \pi_\ell).$$ \end{prop}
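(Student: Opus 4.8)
The plan is to establish the two well-definedness claims and the inequality separately, with the inequality being the substantive point. First I would handle well-definedness of $rk_s(\D_0(\ell))$. The polynomial $\D_0(\ell)$ is only defined up to a unit $u \in \Rd$ and a symplectic change of coordinates $\phi_\sharp$ coming from some $\phi \in {\rm Aut}_h(\G)$. Multiplying by a unit of $\Rd$ multiplies $\D_0$ by a monomial $\pm g\, t_1^{e_1}\cdots t_d^{e_d}$ with $g \in H_1\G$; this rescales every coefficient by the same operator $g$, so the set of quotients of operators appearing in the coefficients of $\D_0(\ell)$ is unchanged, hence $W_\D$ is literally unchanged. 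Applying $\phi_\sharp$ replaces $W_\D$ by its image under an element of ${\rm Sp}(2g,\Z)$; since symplectic automorphisms of $\R^{2g}$ preserve the skew pairing, they preserve the dimension of $W/(W \cap W^\perp)$, so $rk_s$ is unchanged. Orientation-independence then follows from Proposition \ref{orient}: reversing the $j$th component replaces $t_j$ by $t_j^{-1}$, which does not alter which operators (elements of $H_1\G$) occur as coefficients, so $W_\D$ and hence $rk_s(\D_0(\ell))$ are untouched.

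For the inequality, the key observation is that $\D_0(\ell)$ is computed from the presentation matrix $A$ of $\bar M$ obtained from a chosen presentation $P = \langle a_1, \ldots, a_n \mid r_1, \ldots, r_m \rangle_\G$ of $\tilde\pi_\ell$ via Fox calculus, reduced modulo $[\G,\G]$. The crucial point is that every operator appearing in an entry of $A$ — and therefore every operator appearing in a coefficient of the determinant $\D_0(\ell) = \det A$ — is a product of (images in $H_1\G$ of) operators that appear in the relators $r_j$. Concretely, the $(i,j)$ entry of $A$ is the image of the Fox derivative $\partial r_j / \partial a_i$, and each operator weight occurring there is a word in the operators attached to the arcs as they traverse the relator $r_j$; these are exactly the operators making up $r_j$. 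Hence the operators (and their quotients) appearing in coefficients of $\D_0(\ell)$ all lie in the subgroup of $H_1\G$ generated by the images of the operators appearing in the relators of $P$, i.e. $W_\D \subseteq W_P$. Monotonicity of symplectic rank under inclusion of submodules gives $rk_s(\D_0(\ell)) = rk_s(W_\D) \le rk_s(W_P) = rk_s(P)$. Minimizing over all presentations $P$ of $\tilde\pi_\ell$ yields $rk_s(\D_0(\ell)) \le rk_s(\tilde\pi_\ell)$.

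I expect the main obstacle to be making precise the claim that the operators in the coefficients of $\det A$ lie in $W_P$. One has to be careful because $\D_0$ is a gcd of maximal minors rather than a single minor, and because passing from $M$ to $\bar M$ involves quotienting $\G$-operators by $[\G,\G]$; so the argument should be phrased in terms of the subring $\Z[W_P \times \Z^d] \subseteq \Rd$ and the observation that $A$ has all its entries in this subring, whence every minor, and every gcd of minors, also lies in $\Z[W_P \times \Z^d]$. This is routine once set up correctly, but it requires tracking that the Fox derivatives genuinely introduce no operator outside those carried by the relators — which follows because a Wirtinger relator at a crossing involves only the arcs (and their $\G$-translates) meeting that crossing, so its operators are among those of $P$ by definition of $W_P$. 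Monotonicity of $rk_s$ under submodule inclusion is immediate from the remark that $rk_s(V)$ equals the dimension of a maximal symplectic subspace contained in $V \otimes \R$.
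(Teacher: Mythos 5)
Your argument is correct and follows essentially the same path as the paper's own proof: you verify well-definedness by checking invariance of $W_\D$ under multiplication by units of $\Rd$ and under symplectic automorphisms, invoke Proposition \ref{orient} for orientation-independence, and prove the inequality by observing $W_\D \subseteq W_P$ together with monotonicity of symplectic rank and minimization over presentations. The paper's version is terser (it simply asserts that any operator appearing in $\det A$ lies in $W_P$), but your additional care about Fox derivatives and about working in the subring $\Z[W_P \times \Z^d]$ is exactly the right way to make that assertion precise.
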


\begin{proof} Recall that $\D_0(\ell)$ is defined up to multiplication by units in $\Rd = \Z[H_1\G \times \Z^d]$ and symplectic automorphisms of $H_1\G$. 
Since $W_\D$ is spanned by {\sl quotients} of elements of $H_1\G$, it is unchanged if $\D_0(\ell)$ is multiplied by a unit of $\Rd$. Furthermore, a symplectic automorphism of $H_1\G$ preserves orthogonality and hence it takes $W_\D/ W_\D\cap W_\D^\perp$ to an isomorphic module.  The symplectic rank of $\D_0(\ell)$ is therefore well defined. By Proposition \ref{orient}, it is independent of the orientation of $\ell$. 

To see why the inequality holds, consider any $\G$-operator group presentation $P$ of $\tilde \pi_\ell$. We construct a square matrix $\tilde M$ as above with determinant equal to $\D_0(\ell)$. Any operator that appears in the polynomial must be contained in $W_P$. \end{proof}

\begin{remark} (1) We can ``base" $\D_0(\ell)$, multiplying by 
a unit of $\Rd$ so that some coefficient is monic. Then considering quotients of elements is no longer necessary. We will do this in the examples that follow. 

(2) We will see in Example \ref{sharp} that the inequality of Proposition \ref{srank} can be strict. \end{remark}


\section{Applications to virtual links.} The notion of a virtual link is due to L. Kauffman \cite{kauf99}. It is a nontrivial extension of the classical theory of knots and links. Virtual links correspond bijectively to abstract link diagrams, introduced by N. Kamada in \cite{kam93}, \cite{kam97} (see  \cite{kamkam00}).

It is shown in \cite{cks} that one can regard a virtual link as a link diagram in a closed orientable surface up to Reidemeister moves on the diagram, orientation-preserving homeomorphisms of the surface and adding or deleting hollow $1$-handles in the complement of the diagram. Adding a $1$-handle (``stabilization") is a surgery operation, removing two open disks disjoint from the diagram, and then joining the resulting boundary components by an annulus. Deleting a $1$-handle (``destabilization") is also a surgery operation, removing the interior of a neighborhood of a simple closed curve that misses the diagram, and then attaching a pair of disks to the resulting boundary. 

In general we do not assume the surface is connected, but we do assume that each component of the surface meets the link.  We say a virtual link is \emph{split} if it has a diagram $D$ supported by a 2-component surface $S$ such that each component of $S$ meets $D$.  We will also call a link $\ell \subset S \times I$ \emph{split} if it represents a split virtual link.  

The \emph{virtual genus} of $\ell$ is the minimal genus of a surface that contains a diagram representing the link.  For a non-connected surface, this is defined to be the sum of the genera of the components.

We can regard a virtual link also as an equivalence class of embedded links in thickened surfaces. 
The equivalence relation is generated by isotopy as well as stabilization/destabilization. As in \cite{kup03}, destabilization consists of parametrized surgery along an embedded annulus $A$ that is \emph{vertical} in the sense that $A = p_1^{-1}(p_1(A))$, where $p_1$ is the first-coordinate projection on $S\times I$ (see \cite{wald68}). The reverse operation of stabilization, which need not concern us here,  is a parametrized connected-sum operation with a thickened torus.

The main theorem of \cite{kup03} states that every virtual link has a unique representative $\ell\subset S\times I$ for which the genus of $S$ is equal to the virtual genus of $\ell$ and the number of components of $S$ is maximal. Uniqueness is up to Reidemeister moves and orientation-preserving homeomorphisms of the surface.  Consequently, the Alexander polynomials $\D_i(\ell)$ of a link in a thickened surface of minimal genus and maximal number of components are invariants of the virtual link it represents.

The main result of this section is the following theorem.

\begin{theorem} \label{genusthm} Let  $\ell$ be a non-split virtual link. For any representative  $\ell\subset S\times I$, the symplectic rank of $\tilde \pi_\ell$ is twice the virtual genus of $\ell$. \end{theorem}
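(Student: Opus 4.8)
The plan is to establish the two inequalities $rk_s(\tilde\pi_\ell)\le 2g_0$ and $rk_s(\tilde\pi_\ell)\ge 2g_0$ for every representative, where $g_0$ denotes the virtual genus; the asserted equality, and the fact that $rk_s(\tilde\pi_\ell)$ is a virtual link invariant, then follow at once. Write $g$ for the genus of $S$, so $g\ge g_0$; for the lower bound we may assume $g_0\ge 1$, since otherwise $2g_0=0$ and there is nothing to prove.

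For the upper bound I would invoke the main theorem of \cite{kup03}: the representative $\ell\subset S\times I$ arises from a genus-$g_0$ representative by stabilizations, each attaching a handle to the surface away from a diagram, so, after isotopy, $\ell$ has a diagram $D$ on $S$ supported in a subsurface $S_0'\subset S$ of genus $g_0$ obtained by cutting $S$ along $g-g_0$ disjoint essential simple closed curves missing $\ell$. Each boundary curve of $S_0'$ is essential in $S$, so $S_0'$ is $\pi_1$-injective. Taking the basepoint, orbit representatives, and fundamental domain inside a fixed lift of $S_0'$, one checks that every operator occurring in a Wirtinger relator of the resulting orbit presentation $P$ lies in $\pi_1 S_0'\subset\G$: a crossing inside the fundamental domain meets only translates of fundamental arcs by elements stabilizing that lift. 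Hence $W_P$ lies in the image of $H_1 S_0'$ in $H_1\G$, whose symplectic rank is $2g_0$ (the classes of the boundary curves of $S_0'$ lie in its radical), and $rk_s(\tilde\pi_\ell)\le rk_s(P)\le 2g_0$.

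For the lower bound I would argue by contradiction. If $rk_s(\tilde\pi_\ell)<2g_0$, pick a presentation $P$ with $rk_s(P)<2g_0\le 2g$. Since $W_P\otimes\R=H_1\G\otimes\R$ would force $rk_s(P)=2g$, the subspace $W_P\otimes\R$ must be proper, so some primitive class $\phi\in H^1(S;\Z)$ vanishes on $W_P$. Writing $\G=K\rtimes\langle t\rangle$, with $K$ the kernel of the induced surjection $\G\to\Z$ and $\phi(t)=1$, all operators in the relators of $P$ lie in $K$; expressing $\g\in\G$ as $kt^n$ and putting $c_{i,k}:=a_i^{\,k}$, the generators of $\tilde\pi_\ell$ fall into ``levels'' $\{c_{i,k}^{\,t^n}\}$, and each relator involves generators of a single level. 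Thus $\tilde\pi_\ell\cong\ast_{n\in\Z}G_n$ is a nontrivial free product --- each $G_n$ nontrivial, as it contains a meridian --- with $\G$ permuting the factors through $\G\to\Z$. Since $\tilde\pi_\ell=\pi_1(\tilde S\times I\setminus\tilde\ell)$, I would then realize this decomposition, via the sphere theorem and an innermost-sphere argument applied to the $\G$-invariant sphere system, by an essential $2$-sphere $\Sigma\subset S\times I\setminus\ell$. As $S\times I$ is irreducible, $\Sigma$ bounds a ball $B$; $\ell$ cannot miss $B$ (else $\Sigma$ is inessential) nor lie in $B$ (else $\ell$ is classical and $g_0=0$), so $\Sigma$ separates $\ell$ and exhibits it as a split virtual link --- contradicting the hypothesis. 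Hence $rk_s(\tilde\pi_\ell)\ge 2g_0$.

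I expect the main obstacle to be the geometric realization in the last step. The decomposition $\tilde\pi_\ell\cong\ast_{n\in\Z}G_n$ produced by the algebra lives in the \emph{non-compact} cover $\tilde S\times I\setminus\tilde\ell$, has infinitely many factors, and is equivariant only for the infinite group $\G$; turning it into an honest embedded essential sphere --- and then into a splitting sphere for $\ell$ in the compact manifold $S\times I\setminus\ell$ --- requires an equivariant sphere theorem valid for non-compact $3$-manifolds, together with care that the descended sphere system is nondegenerate and genuinely separates components of $\ell$. A secondary, more routine point is verifying the upper-bound claim that the fundamental domain, orbit representatives, and basepoint can all be chosen so that every operator of the orbit presentation lies in $\pi_1 S_0'$.
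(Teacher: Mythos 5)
Your upper bound runs parallel to the paper's: both invoke Kuperberg's theorem to place the diagram in a subsurface $S_0'$ of genus $g_0$ after cutting along essential curves, and both observe that the operators appearing in the resulting orbit presentation lie in the image of $\pi_1 S_0'$, whose image in $H_1\G$ has symplectic rank $2g_0$ because the boundary classes are in the radical. The paper phrases this in terms of the $n$ omitted generators being mutually orthogonal, but the idea is the same.

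Your lower bound, however, is a genuinely different route, and it contains a gap you cannot repair by the stated method. You extract a free product decomposition $\tilde\pi_\ell\cong\ast_{n\in\Z}G_n$ (each factor nontrivial) and then want to invoke the sphere theorem to produce an essential $2$-sphere in $S\times I\setminus\ell$, concluding that $\ell$ is split. But $\tilde\pi_\ell$ is the fundamental group of a non-compact $3$-manifold with boundary, and for such manifolds a nontrivial free product decomposition of $\pi_1$ need not be realized by any essential sphere; the Kneser--Stallings dichotomy (free product implies reducible or $\partial$-compressible) is a theorem about compact manifolds. In fact the geometry here is decisively against you: $\tilde S\times I\cong\R^2\times[0,1]$ is irreducible, so any sphere in $\tilde S\times I\setminus\tilde\ell$ bounds a ball in $\tilde S\times I$, and the only components of $\tilde\ell$ that can lie in such a ball are closed lifts; the generic bi-infinite line components cannot. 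A concrete instance: if $\ell\subset S\times I$ is the trivial knot, $\tilde\pi_\ell$ is a free group of infinite rank (an enormous free product), yet $\tilde S\times I\setminus\tilde\ell$ is irreducible with incompressible boundary and contains no essential sphere whatsoever. So ``free product $\Rightarrow$ sphere $\Rightarrow$ split'' fails, and there is no contradiction to reach.

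The paper's lower bound avoids this by working at the level of the compact manifold $S\times I\setminus\ell$ and by splitting over $\Z$ rather than over the trivial group. Concretely, if a presentation of $\tilde\pi_\ell$ omits the operator $x_1$, then (via Proposition~\ref{relate}) $\pi_1(S\times I\setminus\ell)$ admits an HNN decomposition with stable letter $x_1$ and infinite cyclic edge group $U=\langle y_1\rangle$. The Annulus Theorem in the form of Waldhausen/Scott then produces a proper essential annulus $A$, not a sphere; after an isotopy of $S\times I$ relative to the boundary (Waldhausen), $A$ becomes vertical, and parametrized surgery along it yields a representative of the same virtual link in a thickened surface of strictly smaller genus (Lemma~\ref{cutting}). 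Iterating drives the genus down to $\tfrac12\,rk_s(\tilde\pi_\ell)$, which by Kuperberg's uniqueness of minimal-genus representatives forces $rk_s(\tilde\pi_\ell)\ge 2g_0$. If you want to salvage your approach, the free-product/level decomposition is not the right structure to extract; the splitting of the compact group $\pi_1(S\times I\setminus\ell)$ over $\Z$, hence an annulus rather than a sphere, is what the geometry actually supplies.
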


Proposition \ref{srank} immediately yields the following.

\begin{cor} \label{reduce} For $\ell$ as above, the virtual genus of $\ell$ is at least half the symplectic rank of  $\D_0(\ell)$.
\end{cor}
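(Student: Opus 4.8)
\textbf{Proof proposal for Corollary \ref{reduce}.}

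The plan is to simply chain the two inequalities that are already available. By Theorem \ref{genusthm}, for any representative $\ell \subset S \times I$ of the non-split virtual link we have $rk_s(\tilde\pi_\ell) = 2 g_v(\ell)$, where $g_v(\ell)$ denotes the virtual genus. By Proposition \ref{srank}, $rk_s(\D_0(\ell)) \le rk_s(\tilde\pi_\ell)$. Combining these gives $rk_s(\D_0(\ell)) \le 2 g_v(\ell)$, equivalently $g_v(\ell) \ge \tfrac12 rk_s(\D_0(\ell))$, which is exactly the assertion.

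The one point that deserves a word of care is that Corollary \ref{reduce} is phrased for an arbitrary representative $\ell \subset S \times I$, not only for the minimal-genus one, so we should check that the displayed inequality is genuinely independent of the choice of representative. The left-hand side, $rk_s(\D_0(\ell))$, is well defined for the virtual link once we fix a standard symplectic basis of $H_1 S$: by Proposition \ref{srank} it is unchanged under multiplication of $\D_0(\ell)$ by units of $\Rd$ and under symplectic automorphisms of $H_1\G$, and by the uniqueness statement of \cite{kup03} quoted above, passing between two representatives changes $\D_0(\ell)$ only by such operations together with the identification of homology induced by an orientation-preserving homeomorphism of surfaces. The right-hand side, $g_v(\ell)$, is by definition an invariant of the virtual link. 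Hence the inequality, once established for one representative, holds for all, and in particular Theorem \ref{genusthm} may be applied to whichever representative is convenient.

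There is, properly speaking, no substantive obstacle here: the corollary is a formal consequence of two results proved earlier in the paper. The only place a reader might stumble is in conflating $rk_s(\tilde\pi_\ell)$ (a minimum over presentations) with $rk_s(\D_0(\ell))$ (read off from the coefficients of a single polynomial); the content of Proposition \ref{srank} is precisely that the latter bounds the former from below, and it is that bound, not an equality, that we invoke. Indeed the inequality of Proposition \ref{srank} can be strict, so Corollary \ref{reduce} does not in general compute the virtual genus --- it only bounds it below, and that is all that is claimed.
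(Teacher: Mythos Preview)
Your core argument is correct and is exactly the paper's approach: the corollary is stated immediately after Theorem \ref{genusthm} with the remark that Proposition \ref{srank} ``immediately yields'' it, and your first paragraph reproduces precisely that chaining of $rk_s(\D_0(\ell)) \le rk_s(\tilde\pi_\ell) = 2\,g_v(\ell)$.

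One small caution about your second paragraph: the discussion of representative-independence is unnecessary and slightly misstated. Both $\tilde\pi_\ell$ and $\D_0(\ell)$ are computed from a \emph{chosen} representative $\ell\subset S\times I$, and Theorem \ref{genusthm} and Proposition \ref{srank} each apply to that fixed representative; the inequality therefore holds representative by representative, with no need to argue that $rk_s(\D_0(\ell))$ is itself a virtual-link invariant. In fact the claim that Kuperberg's uniqueness theorem implies $\D_0(\ell)$ changes only by units and symplectic automorphisms between representatives is only valid among minimal-genus representatives; under stabilization the ambient ring $\Rd$ itself changes. This does not affect your proof, which goes through without that paragraph.
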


The \emph{exterior} $X$ of $\ell$ is $S \times I$ minus the interior of a regular neighborhood of $\ell$. 

\begin{prop} Assume that $\ell$ is neither a split link nor a a local link (that is, a link in a 3-ball). Then the exterior $X$  is an irreducible 3-manifold with incompressible boundary.
\end{prop}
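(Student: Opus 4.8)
The plan is to prove irreducibility and boundary-incompressibility separately, in each case reducing a failure of the property to the excluded cases (split or local).

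\textbf{Irreducibility.} Suppose $X$ is reducible, so there is an embedded 2-sphere $\Sigma \subset X$ that does not bound a ball in $X$. First I would push $\Sigma$ off the torus components $S\times\{0\}, S\times\{1\}$ of $\partial(S\times I)$ and off the boundary torus components coming from $\partial N(\ell)$. Since $S$ has positive genus, $S\times I$ is irreducible and has incompressible boundary (it is an orientable Haken 3-manifold, being the exterior of the spatial graph described in Remark \ref{spatial graph}, or directly: $S\times I$ deformation retracts to $S$, and $\pi_2(S\times I)=\pi_2(S)=0$ since $\tilde S=\R^2$). Therefore $\Sigma$ bounds a ball $B$ in $S\times I$. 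If $B$ misses $\ell$ entirely, then $B\subset X$ and $\Sigma$ bounds a ball in $X$, a contradiction. Otherwise $\ell\cap B\ne\emptyset$; since $\Sigma=\partial B$ misses $\ell$, the ball $B$ contains a nonempty union of components of $\ell$. If $B$ contains \emph{all} components of $\ell$ then $\ell$ lies in a 3-ball and is a local link, contrary to hypothesis. If $B$ contains some but not all components, then $\Sigma$ separates $\ell$ into two nonempty sublinks lying in disjoint 3-balls $B$ and $(S\times I)\setminus \mathrm{int}(B)$... but I must be careful: the latter region is a punctured $S\times I$, not a ball. To handle this correctly I would instead argue that $\Sigma$ exhibits $\ell$ as a split link: $\ell = (\ell\cap B)\sqcup(\ell\setminus B)$ with the two parts separated by $\Sigma$, and then invoke the fact that a link in $S\times I$ split by a sphere represents a split virtual link (one may destabilize, or observe that the sphere together with a vertical annulus decomposition realizes the splitting in the sense of the earlier definition). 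This contradicts the hypothesis that $\ell$ is not split.

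\textbf{Incompressibility of $\partial X$.} The boundary $\partial X$ consists of the tori $S\times\{0\}$, $S\times\{1\}$ (genus $\ge 1$) together with the boundary tori of $N(\ell)$. Suppose some boundary component $F\subset\partial X$ is compressible, so there is an embedded disk $D\subset X$ with $\partial D\subset F$ an essential curve on $F$. For $F$ one of the $S\times\{i\}$: compressing $S\times\{i\}$ along $D$ inside $S\times I$ (using irreducibility of $X$, already established, to rule out the trivial case and push $D$ off $\ell$) would compress $S\times\{i\}$ in $S\times I$, impossible since $S$ has positive genus and $S\times\{i\}$ is incompressible in $S\times I$. For $F=\partial N(\ell_i)$ a peripheral torus: an essential compressing disk $D$ means either $\partial D$ is a meridian, in which case $D$ caps off to a 2-sphere meeting $\ell_i$ once — impossible since every component has even (here, one-point) intersection... more precisely a sphere meeting $\ell$ transversely in one point is absurd by a $\Z/2$ homology/parity argument — or $\partial D$ is a longitude (or a $(p,q)$-curve with $p\ne 0$), in which case $\ell_i\cup D$ gives a sphere or disk showing $\ell_i$ bounds a disk in $S\times I$, so $\ell_i$ could be split off, making $\ell$ split (if $d>1$) or local (if $d=1$, as then $\ell$ lies in a ball near the disk), contradicting the hypotheses.

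\textbf{Main obstacle.} The genuinely delicate point is the case analysis for a separating sphere: translating "a 2-sphere separates $\ell$ into two nonempty pieces in $S\times I$" into "$\ell$ is a split virtual link" in the precise sense defined in this section. I expect to spend the bulk of the argument there, either by showing the sphere can be isotoped so that one side is a ball (hence the piece inside is local and can be absorbed), reducing to a genuine connected-sum/split decomposition of the supporting surface, or by directly constructing the 2-component supporting surface via surgery. The boundary-incompressibility cases for the peripheral tori are routine once irreducibility is in hand, and the incompressibility of $S\times\{i\}$ is immediate from the product structure.
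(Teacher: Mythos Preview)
Your overall strategy matches the paper's: use irreducibility of $S\times I$ so that any sphere in $X$ bounds a ball there, then invoke the split/local hypotheses; for $\partial X$, treat $S\times\{i\}$ and the peripheral tori separately. Two points deserve adjustment.

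First, the ``main obstacle'' you flag is not one. If $\Sigma\subset X$ bounds a ball $B\subset S\times I$ with $\emptyset\ne\ell\cap B\subsetneq\ell$, then $\ell\cap B$ lies in a $3$-ball and hence has a diagram on $\mathbb S^2$, while $\ell\setminus B$ has a diagram on $S$; thus $\ell$ has a diagram on the disconnected surface $\mathbb S^2\sqcup S$ and is split by the definition given earlier in the section. The paper dispatches the entire irreducibility paragraph in two sentences.

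Second, the actual gap is in your peripheral-torus case. You assert that if $\partial D$ is a $(p,q)$-curve with $p\ne 0$ then ``$\ell_i$ bounds a disk in $S\times I$''; this is only clear when $p=\pm 1$, and you have not excluded $|p|>1$ (where $N_i\cup N(D)$ would be a punctured lens space, not a ball). The paper avoids this case split entirely by appealing to Corollary~\ref{homology}: since $[m_i]$ generates a free $\Z$-summand of $H_1(X)$, the relation $\alpha[\lambda]+\beta[m_i]=0$ read in the $m_i$-coordinate gives $\alpha\mid\beta$, and $\gcd(\alpha,\beta)=1$ forces $\alpha=\pm 1$. Then $N_i\cup N(D)$ is a $3$-ball containing $\ell_i$ and no other component, yielding split or local directly---no separate meridian argument is needed.

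Finally, for $S\times\{i\}$ the paper's argument is purely algebraic and shorter than yours: the composite $\pi_1(S\times\{i\})\to\pi_1 X\to\pi_1(S\times I)$ is an isomorphism, so the first map is injective. No disk-pushing is required.
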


\begin{proof}

Since $\tilde S \times I$ is irreducible, so is $S \times I$ (see, for example, Proposition 1.6 of \cite{hatcher2}). An embedded 2-sphere $\Sigma \subset X$ must bound a ball in $S \times I$.  The hypotheses ensure that such a ball is in $X$.

The boundary of $X$ is incompressible if the inclusion map of any component induces an injection of fundamental groups. This is clear for each component $S \times \{j\}$, $j = 0, 1$, since 
the each inclusion map $S \times \{j\} \hookrightarrow S \times I$ induces an isomorphism of fundamental groups. 
Consider a neighborhood $N_i$ of some component $\ell_i$ of $\ell$. If $\partial N_i \hookrightarrow X$ induces a homomorphism of fundamental groups that is not injective, then by the Loop Theorem, there exists an embedded $2$-disk $D \subset X$ such that the boundary of $D$ is an essential simple closed curve in $\partial N_i$. Elements of the first homology of $\partial N_i$ can be written 
$\alpha [\l] + \beta [m]$, where $\l$ and $\ell_i$ cobound an annulus in $N_i$, $m$ is a meridian of $\ell_i$, and $\alpha, \beta$ are relatively prime integers. Corollary \ref{homology} implies that $(\alpha, \beta) = (\pm 1, 0)$. Then by thickening $D$ and adjoining it to $N_i$, we obtain a $3$-ball in $X$ containing $\ell_i$ but no other component of $\ell$. Hence $\ell$ is either a split link or a local knot, contrary to our hypothesis. Hence the boundary of $X$ is incompressible. 
\end{proof}

A curve in $S$ is \emph{homologically essential} if it represents a nontrivial element of $H_1(S; \Z)$.  We will say that a diagram $D \subset S$ of a link $\ell \subset S \times I$ is \emph{reducible} if $S$ contains a homologically essential simple closed 
curve $C$ that is disjoint from $D$. In this case, we can perform $1$-surgery on $C$ and obtain a diagram in a surface of smaller genus.

\bigskip We now prove Theorem \ref{genusthm}.

\begin{proof}  It is clear that any component of $\ell$ contained in a $3$-ball can be removed without affecting the virtual genus of $\ell$ or the symplectic rank of $\tilde \pi_\ell$. Hence we assume without loss of generality that $\ell$ is neither a split nor a local link. 

The proof of the main theorem of \cite{kup03} shows that if $\ell$ is represented by a diagram in a surface $S$ and if ${\rm genus}(S) = {\rm virtual\  genus}(\ell) + n$, for some positive integer $n$, then, after Reidemeister moves, there exists an essential $n$-component $1$-manifold $C$ that is disjoint from the diagram and along which we can perform surgery to produce a surface of genus equal to the virtual genus of $\ell$.

Build a fundamental region for $S$ by cutting along the $1$-manifold $C$ and continuing. The edges of $C$ correspond to generators of 
$\G$ that do not appear in the corresponding operator group presentation $P$ of $\tilde\pi_\ell$ and so do not appear in $W_P$.  These $n$ generators represent mutually orthogonal  elements of $H_1(S; \Z)$ since surgery along $C$ reduces the genus of $S$ by $n$.  Hence the symplectic rank of $\tilde \pi_\ell$ is at most twice the virtual genus of $\ell$.

Now suppose that $\tilde\pi_\ell$ has symplectic rank less than twice the virtual genus of $\ell$.  Then some operator group presentation $P$ of $\tilde \pi_\ell$ must omit a generator of $\G = \<x_1, y_1, \ldots, x_g, y_g \mid \prod_{i=1}^g [x_i, y_i]\>$. Without loss of generality, we can assume that the omitted generator is $x_1$. By  Proposition \ref{relate}, the group $\pi_1(S \times I \setminus \ell)$ has a presentation in which $x_1$ occurs only in the relator $\prod_i [x_i, y_i]$. Express the relator as 
$$ x_1 y_1 x_1^{-1} =( \prod_{i=2}^g[x_i, y_i])y_1.$$
Let $B$ be the subgroup of $\pi_1(S \times I \setminus \ell)$ generated by $y_1, x_2, y_2, \ldots, x_g, y_g$. Let $U$ and $V$ be the cyclic subgroups of $B$ generated by $y_1$ and $(\prod_{i=2}^g[x_i, y_i])y_1$, respectively. Since the inclusion $S \times \{0\} \to S \times I \setminus \ell$ induces an injection of fundamental groups, the subgroups $U$ and $V$ are in fact infinite cyclic. Hence $\pi_1(S \times I \setminus \ell)$ has an HNN decomposition with stable letter $x_1$, base group $B$ and infinite cyclic amalgamating subgroups $U$ and $V$ (see \cite{ls}, for example).

Since $\pi_1(S \times I \setminus \ell)$ splits over the infinite cyclic group $U$ and $X$ is irreducible with incompressible boundary, the proof of Satz 1.2 of \cite{wald67} (see also Corollary 1.2 of \cite{scott}) shows that 
there exists a proper annulus $A \subset X$ such that :\\

\item{} (1) The inclusion map $i: A \hookrightarrow X$ induces an injection $i_*: \pi_1A \to \pi_1 X$ with the image of $i_*$ conjugate to a subgroup of $U$. \\

Since the image of $i_*$ is generated by a simple closed curve in the surface $S \times \{1\} \subset X$, the image is conjugate to the entire subgroup $U$. However, we will not need this.
We do, however, need the following, which follows easily from the proof in \cite{scott}: \\

\item{}(2) The annulus $A$ meets a simple closed curve representing $x_1$ transversely in a single point.\\

We argue that, after isotopy, we can find a vertical annulus $C\times I$ in $X$  such that $C \times \{1\} \subset S$ is homologically essential.  We can then perform parametrized surgery on $A$, as in \cite{kup03}, in order to reduce the genus of $S$.

Condition (2) implies that at least one boundary component of $A$ must be contained in $\partial(S \times I)$. Moreover, since $A$ is non-separating, it is impossible for both boundary circles of $A$ to lie on the same component of $\partial(S\times I)$. 

Assume that some component of $\partial A$ lies in $\partial(S\times I)$ while the other is contained in the boundary of a component $\partial N_i$ of the neighborhood $N = N_1 \cup \ldots \cup N_d$ of $\ell$. Without loss of generality, we assume that a component lies in $S \times \{1\}$. (If it is contained in $S\times \{0\}$, then the argument is similar.) By Corollary \ref{homology}, $A$ meets the boundary of $N_i$ in a longitude. (``Longitude" was defined in the proof of Theorem \ref{trivial}.)  We can use $A$ to perform an isotopy that lifts $\ell_i$ up into a collar neighborhood $Y$ of $S\times \{1\}$ containing no other component of $\ell$, and extend the annulus to the lower boundary of $Y$.  Consider the sublink $\ell'$ of $\ell$ obtained by deleting $\ell_i$. Regard $\ell'$ as a link in the closure of $S \times I \setminus Y$. Its fundamental group results from $\pi_1(S \times I \setminus \ell)$ by annihilating a meridian of $\ell_i$. Since the quotient group also splits over the infinite cyclic group $U$ generated by $x_1$, we can apply the preceding argument. After a finite number of steps, we obtain a proper annulus satisfying (1) and (2) with boundary components on $S \times \{1\}$ and $S \times \{0\}$.  

By Lemma 3.4 of \cite{wald68}, there is an isotopy of $S \times I$ that is constant on the boundary and takes $A$ to a vertical annulus $A'$.
The link $\ell$ is carried to an equivalent link, which we continue to denote by $\ell$, that is disjoint from $A'$. 

Recall that we began with a presentation $P$ of $\tilde \pi_\ell$ that omits the generator $x_1$. Parametrized surgery on the annulus $A$ produces a link $\bar \ell \subset \bar S \times I$, where the genus of $\bar S$ is one less than that of $S$.  By Lemma \ref{cutting},  we obtain a presentation $\bar P$ for $\pi_1( \bar S \times I \setminus \bar \ell)$ from $P$ by introducing relations $x_1=y_1=1$. 

It is clear that $\bar P$ has the same symplectic rank as $P$.  Hence we may repeat the above construction until the genus of the thickened surface is half the symplectic rank of $\tilde \pi_\ell$.
\end{proof} 

\begin{lemma} \label{cutting} Let $\ell \subset S \times I$ be a link in thickened surface, and assume that $A$ is a vertical annulus in $S\times I \setminus \ell$ such that $A \cap (S \times \{1\}) = C$ represents a generator $y_1$ of $\pi_1 S \cong \< x_1, y_1, \ldots, x_g, y_g \mid \prod[x_i, y_i]\>$. If $\bar \ell \subset \bar S \times I$ is the link resulting from parametrized surgery on $A$, then $\pi_1 (\bar S \times I \setminus \bar \ell)$ is isomorphic to $\pi_1 (S\times I \setminus \ell)$ modulo the normal subgroup generated by $x_1, y_1$. \end{lemma}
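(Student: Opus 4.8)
\textbf{Proof proposal for Lemma \ref{cutting}.}

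The plan is to compute $\pi_1(\bar S \times I \setminus \bar\ell)$ directly via van Kampen's theorem, using the vertical annulus $A$ to decompose $S \times I \setminus \ell$ into a piece that survives the surgery and a piece that does not. First I would set up the surgery carefully: parametrized surgery on the vertical annulus $A = C \times I$ removes an open regular neighborhood $N(A) \cong C \times I \times (-\e, \e)$ from $S \times I$ and glues in two copies of $D^2 \times I$ along the resulting torus-like boundary (more precisely, along $(\partial D^2) \times I$ on each side, matched with the two boundary annuli of $N(A)$). Since $A$ is disjoint from $\ell$, the link is unaffected as a subset, so $\bar\ell$ is literally $\ell$ sitting inside $\bar S \times I$, and $\bar S \times I \setminus \bar\ell$ is obtained from $S \times I \setminus \ell$ by the same cut-and-paste.

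The key step is the van Kampen computation. Write $Y = S \times I \setminus \ell$. I would express $Y$ as the union of $Y_1 = Y \setminus \mathrm{int}\, N(A)$ (an open neighborhood thereof) and $Y_2 = N(A) \cong C \times I \times (-\e,\e)$, with intersection $Y_1 \cap Y_2$ homotopy equivalent to two disjoint copies of $C$, i.e.\ two circles each representing the class $y_1 \in \G = \pi_1 S \hookrightarrow \pi_1 Y$ (using that $S \times \{1\} \hookrightarrow Y$ is a $\pi_1$-injection, which holds by the incompressibility arguments / Proposition \ref{relate}). Now $\bar Y = \bar S \times I \setminus \bar\ell$ is the union of the \emph{same} $Y_1$ with $Y_2' = (D^2 \times I) \sqcup (D^2 \times I)$, glued along the same two circles. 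Since each $D^2 \times I$ is simply connected, van Kampen gives $\pi_1 \bar Y \cong \pi_1 Y_1 / \langle\langle y_1, y_1 \rangle\rangle = \pi_1 Y_1 /\langle\langle y_1\rangle\rangle$. On the other hand, applying van Kampen to the decomposition $Y = Y_1 \cup Y_2$ with $Y_2 \simeq C \times (\mathrm{pt})$ (an annulus, $\pi_1 \cong \Z = \langle y_1 \rangle$) amalgamated over two copies of $\langle y_1\rangle$ shows $\pi_1 Y$ is an HNN extension of $\pi_1 Y_1$ with stable letter (a path crossing $N(A)$) whose associated subgroups are both $\langle y_1 \rangle$; but in fact the geometry here is that $C$ is non-separating in $S$, so $Y_1$ is connected and $\pi_1 Y = \langle \pi_1 Y_1, x_1 \mid x_1 y_1 x_1^{-1} = (\text{the other copy of } y_1)\rangle$, where $x_1$ is exactly the dual generator crossing $C$. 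Killing $x_1$ and $y_1$ in $\pi_1 Y$ therefore kills the stable letter and identifies the two copies of $y_1$, recovering precisely $\pi_1 Y_1 / \langle\langle y_1 \rangle\rangle \cong \pi_1 \bar Y$. This is the isomorphism claimed.

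I expect the main obstacle to be bookkeeping the basepoints and the precise identification of which curve is ``$y_1$'' versus the dual ``$x_1$'' crossing it, so that the normal closure $\langle\langle x_1, y_1\rangle\rangle$ in $\pi_1(S \times I \setminus \ell)$ matches the kernel of the surjection onto $\pi_1(\bar S \times I \setminus \bar\ell)$ on the nose; one must check that adding the relation $x_1 = 1$ collapses the HNN extension to its base group (so the two conjugate copies of $\langle y_1\rangle$ are identified) and then $y_1 = 1$ does the remaining killing, and that no further relations are introduced. A secondary technical point is justifying the homotopy equivalences $Y_2 \simeq C$ and $Y_2' \simeq \mathrm{pt} \sqcup \mathrm{pt}$ and the condition that $Y_1 \cap Y_2$ deformation retracts onto two circles --- all routine once one fixes that $A$ is vertical and disjoint from $\ell$ --- together with invoking the $\pi_1$-injectivity of $S \times \{1\} \hookrightarrow S \times I \setminus \ell$ to ensure the amalgamating subgroups are genuinely infinite cyclic (needed only so the phrase ``isomorphic to $\pi_1$ modulo the normal subgroup generated by $x_1, y_1$'' is unambiguous, not for the surjectivity itself).
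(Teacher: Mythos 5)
Your proof is correct, but it takes a genuinely different route from the paper's. The paper works upstairs in the universal cover: it cuts $S$ along $C$ to obtain a bounded surface $S_0$, identifies $\tilde S_0$ with a subsurface of $\tilde S$ built from translates of the fundamental polygon along edges other than $x_1$, views the operator-group presentation of $\tilde\pi_\ell$ (which omits the operator $x_1$ by hypothesis) as a presentation over the free group $\G_0 = \langle y_1, x_2, \ldots, x_g, y_g\rangle$, and then reuses the semidirect-product argument of Proposition \ref{relate} to pass to $\pi_1(S_0\times I\setminus\ell')$, finally observing that capping off the boundary annuli introduces the relator $y_1$. Your argument stays entirely downstairs: a direct van Kampen decomposition of $Y = S\times I\setminus\ell$ along a collar of the vertical annulus $A$ exhibits $\pi_1 Y$ as an HNN extension of $\pi_1 Y_1$ with stable letter $x_1$ and associated cyclic subgroups the two boundary circles $c_1, c_2$ of $Y_1$, while $\pi_1\bar Y$ is exactly $\pi_1 Y_1$ with both $c_1$ and $c_2$ killed; setting $x_1=1$ collapses the HNN extension and identifies $c_1$ with $c_2$, and then $y_1=1$ finishes the job. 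This is more elementary and self-contained --- it sidesteps the covering-space and operator-group machinery entirely --- at the cost of having to set up the HNN presentation and identify the stable letter with the dual generator $x_1$, the same bookkeeping the paper handles via its fundamental-polygon labeling. One small slip to fix: you write $\pi_1\bar Y \cong \pi_1 Y_1/\langle\langle y_1, y_1\rangle\rangle = \pi_1 Y_1/\langle\langle y_1\rangle\rangle$, but the two boundary circles of $Y_1$ both map to (conjugates of) $y_1$ only in $\pi_1 Y$, where they are related by conjugation by the stable letter $x_1$; they need not coincide in $\pi_1 Y_1$ itself. The correct statement is $\pi_1\bar Y\cong\pi_1 Y_1/\langle\langle c_1,c_2\rangle\rangle$, which is precisely what your HNN-collapse computation produces (first $c_1 c_2^{-1}=1$ from $x_1=1$, then $c_1=1$ from $y_1=1$), so the conclusion stands.
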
 

\begin{proof} Let $R$ be a fundamental domain for $S$, a $2g$-gon with oriented edges labeled $x_1, y_1, \ldots, x_g, y_g$ as above. Let $S_0$ be the bounded surface that results from $S$ by cutting along the curve $C$. The universal cover $\tilde S_0$ of $S_0$ is a subsurface of $\tilde S$, a union of copies of $R$ matched along edges except those labeled $x_1$. The link $\ell$ lifts to $\ell' \subset \tilde S_0 \times I$ and $\pi_1(\tilde S \times I \setminus \ell')$ is a $\G_0$-operator group, where $\G_0$ is the subgroup of $\G$ generated by $y_1, x_2, y_2, \ldots, x_g, y_g$.  A presentation is also a presentation of $\tilde \pi_\ell$, one in which the operator $x_1$ does not appear.  The argument of Proposition \ref{relate} shows that $\pi_1( S_0 \times I \setminus \ell')$ is isomorphic to $\pi_1(S \times I \setminus \ell)$ modulo the normal subgroup generated by $x_1$. Completing the parametrized surgery introduces the relator $y_1$.

\end{proof}


\section{Examples.} 

\begin{example} 

The diagram in Figure \ref{textile} represents a virtual knot $k$ sometimes called the \emph{virtual trefoil}.  The polynomial $\D_0(k)$, computed in Section \ref{poly}, has symplectic rank 2.  Since the link has a diagram on the torus, Theorem \ref{genusthm} implies the well-known fact that the virtual genus of the knot is $1$.
\end{example}


\begin{example} \label{sharp} Let $k$ be a virtual knot. A \emph{satellite} $\tilde k$ is defined in \cite{sw12} as in the classical case by replacing $k$ by a knot $\tilde k$ in a regular neighborhood of $k$ (but not contained in a 3-ball). It is shown that if $\tilde k$ is a satellite of $k$, then the virtual genus of $\tilde k$ is equal to that of $k$. 

Consider the double $\tilde k$ of the virtual trefoil $k$ of the previous example. 
It is a special case of a satellite knot. A diagram for $\tilde k$ appears in Figure \ref{double}.  Calculation reveals that 
$$\D_0(\tilde k) = (t-1) (x y - 1)^2  .$$
The symplectic rank of $\D_0(\tilde k)$ is zero.  However, the virtual genus of $\tilde k$ is equal to that of $k$, which is 1.  Hence the inequality of Corollary \ref{reduce} is not an equality in general.

\begin{figure}
\begin{center}
\includegraphics[height=2.2 in]{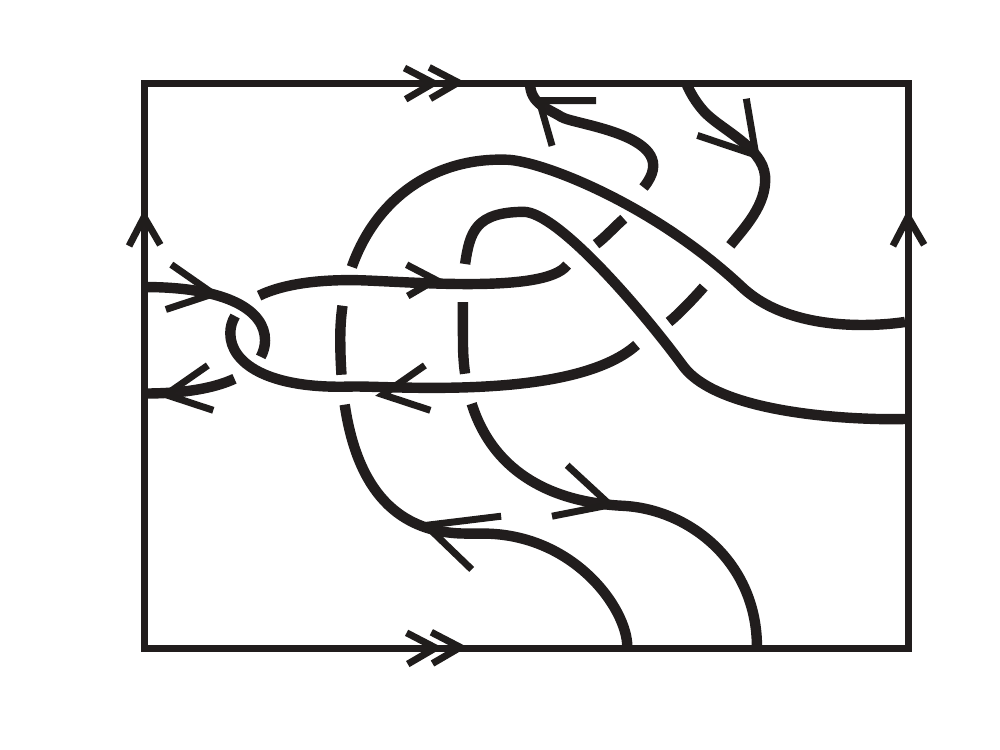}
\caption{Satellite virtual knot}
\label{double}
\end{center}
\end{figure}

\end{example}

\begin{figure}
\begin{center}
\includegraphics[height=2.8 in]{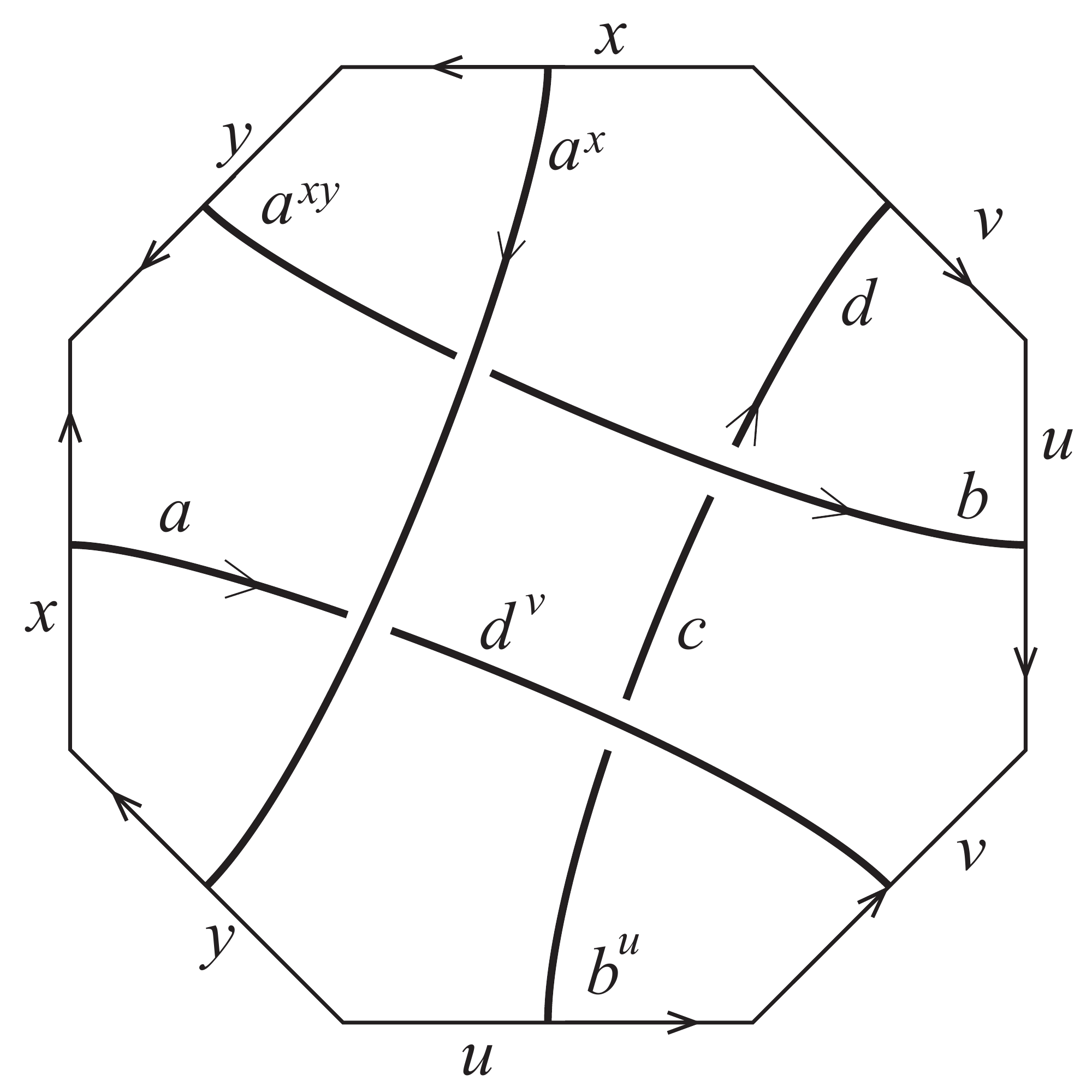}
\caption{Kishino's knot}
\label{kishino}
\end{center}
\end{figure}

\begin{figure}
\begin{center}
\includegraphics[height=3 in]{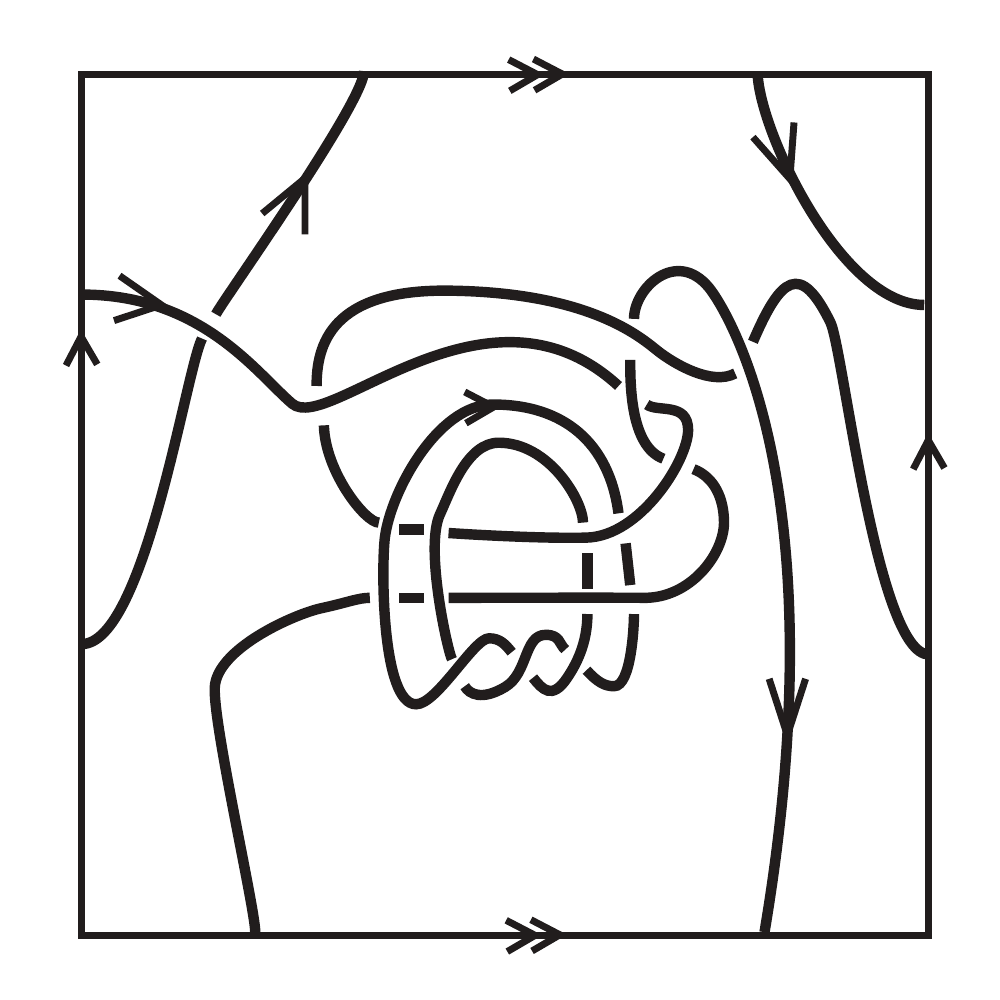}
\caption{Stoimenow's link}
\label{stoimenow}
\end{center}
\end{figure}

\begin{example} Consider the oriented diagram for Kishino's knot in Figure \ref{kishino}. The group is 
$$\tilde\pi_k = \< a, b, c, d \mid a^x b = a^{xy} a^x, a^x d^v = a a ^x, b d = c b, d^v b^u= c d^v\>_\G.$$ The associated matrix $A$ is 
$$\begin{pmatrix} x - x y - x t & t & 0 & 0 \\ x - x t - 1 & 0 & 0& v t\\ 0 & 1-t& -1 & t \\ 0 & u t& -1 & v - v t\end{pmatrix}.$$
Here $\D_0(k) = (x- u v x)t^2 + (1 + v - x + uvx- v xy- uvxy)t + (-v + vxy)$.
The symplectic rank of $\D_0(k)$ is $4$. By Corollary \ref{reduce}, the virtual genus of Kishino's knot is 2. This result was proved earlier by Kauffman and Dye \cite{dk05}, using the Jones polynomial and symplectic algebra to produce lower bounds on virtual genus.  

A virtual link $\ell$ is \emph{invertible} if some oriented diagram is equivalent to the same underlying diagram with the opposite orientation. In this case, $\D_0(\ell)(t_1, \ldots, t_d)$ and $\D_0(\ell)(t_1^{-1}, \ldots, t_d^{-1})$ are equivalent. 

We see that Kishino's knot $k$ is not invertible as follows. If $k$ were invertible, then there would exist an symplectic inversion of $\R^4 = {\rm span}(x, y, u, v)$ such that $x - uvx \mapsto -v+vxy$. 

If $x \mapsto -v$ then $uvx \mapsto -vxy = -y(-x)(-v)$ and hence $y \mapsto -u$. But 
the symplectic pairing $\<x, y\>$ is equal to $1$ while $\<-v, -u\> = -1$, a contradiction. (In fact, this change of basis corresponds to flipping $(S \times I, k)$ over, reversing the orientations of both the knot $k$ and the surface $S$.) 

The  only other  possibility is $x \mapsto vxy$ and $v \mapsto uvx$. In this case, 
$uvxy = (uvx)(vxy)x^{-1}v^{-1} \mapsto v x (vxy)^{-1}(uvx)^{-1} = u^{-1}v^{-1}x^{-1} y$. Since the middle term of $\D_0(k)$ is not preserved, we again have a contradiction. 

Kauffman informs the authors that the noninvertibility of Kishino's knot also can be shown using the 
parity bracket \cite{kauffcomm}. 

\end{example}


\begin{example} A. Stoimeow proposed the virtual link $\tilde \ell$ in Figure \ref{stoimenow} as an example for which the methods of \cite{dk05} appear to be insufficient to determine virtual genus. 

Instead of computing directly, we can recognize that $\tilde \ell$ is a satellite and use \cite{sw12}. In the companion link $\ell$, the classical trefoil component is replaced by an unknot. We simplify further by computing the one-variable polynomial  $\D_0(\ell)(t, t)$, which is equal to
$$(t-1)^2[(yx^{-1}-1)+ t(1-y)+t^2(-1+2y-y^2)+t^3(y^2-y)+t^4(xy-y^2)].$$
The symplectic rank is 2. The symplectic rank of $\D_0(\ell)(t_1, t_2)$ cannot be smaller.
Since the link has a diagram on a torus, Corollary \ref{reduce} implies that $\ell$ and hence $\tilde \ell$ have virtual genus equal to $1$. 

\end{example}


\end{document}